\documentclass[submission]{FPSAC2024arxiv}


\newtheorem{thm}{Theorem}
\newtheorem{lem}{Lemma}
\newtheorem{definition}{Definition}
\newtheorem{proposition}{Proposition}

\usepackage{tikz-cd,stackengine,adjustbox}

\newenvironment{customthm}[1]
  {\innercustomthm}
  {\endinnercustomthm}

\DeclareMathOperator{\grad}{grad}

\DeclareMathOperator{\id}{id}

\DeclareMathOperator{\gr}{gr}

\DeclareMathOperator{\initial}{in}

\DeclareMathOperator{\MultiProj}{MultiProj}

\DeclareMathOperator{\conv}{conv}

\newcommand{\la}{\lambda}
\newcommand{\om}{\omega}
\newcommand{\cU}{\mathcal U}
\newcommand{\cL}{\mathcal L}
\newcommand{\cJ}{\mathcal J}
\newcommand{\cO}{\mathcal O}

\newcommand{\cQ}{\mathcal Q}

\newcommand{\fsl}{\mathfrak{sl}}

\newcommand{\bC}{\mathbb{C}}
\newcommand{\bP}{\mathbb{P}}
\newcommand{\bR}{\mathbb{R}}
\newcommand{\bZ}{\mathbb{Z}}

\newcommand{\ar}{\rangle}
\newcommand{\bs}{\backslash}

\newcommand\ledot{\mathrel{\ensurestackMath{%
  \stackengine{-.5ex}{\lessdot}{-}{U}{c}{F}{F}{S}}}}

\title[Poset polytopes and pipe dreams]{Poset polytopes and pipe dreams: toric degenerations and beyond}

\author{Ievgen Makedonskyi
\addressmark{1}
\and Igor Makhlin
\addressmark{2}}

\address{\addressmark{1}Beijing Institute of Mathematical Sciences and Applications (BIMSA) \\ \addressmark{2}Technical University of Berlin}

\received{\today}


\abstract{We demonstrate how pipe dreams can be applied to the theory of poset polytopes to produce toric degenerations of flag varieties. Specifically, we present such constructions for marked chain-order polytopes of Dynkin types A and C. These toric degenerations also give rise to further algebraic and geometric objects such as PBW-monomial bases and Newton--Okounkov bodies. We discuss a construction of the former in the type A case and of the latter in type C.}


\keywords{flag varieties, poset polytopes, pipe dreams, toric varieties, Lie algebras}


\usepackage[backend=bibtex]{biblatex}
\addbibresource{refs.bib}

\begin{document}

\maketitle

\section{Introduction}

Recent decades have seen a wide range of new methods for constructing toric degenerations of flag varieties. These methods commonly proceed by attaching a degeneration to every combinatorial or algebraic object of a certain form. Examples of such objects include adapted decompositions in the Weyl group, certain valuations on the function field and certain birational sequences (see~\cite{FaFL1} for details concerning these results and a partial history of the subject). These correspondences are of great interest for a number of reasons, however, not many explicit constructions are known for the attached objects. This leads to a shortage of concrete recipes that would work in a general situation.

Until recently, the only explicit constructions known to work in the generality of all type A flag varieties were the Gelfand--Tsetlin (GT) degeneration due to~\cite{GL} and the Feigin--Fourier--Littelmann-Vinberg (FFLV) degeneration due to~\cite{FFL2} (as well as slight variations of these two). An important step was made by Fujita in~\cite{Fu} where it is proved that each marked chain-order polytope (MCOP) of the GT poset provides a toric degeneration of a type A flag variety. Each such MCOP $\cQ_O(\la)$ is given by a subset $O$ of the GT poset $P$ and an integral dominant $\fsl_n$-weight $\la$. The GT and FFLV polytopes appear as special cases. General MCOPs were defined by Fang and Fourier in~\cite{FF} and present a far-reaching generalization of the poset polytopes considered by Stanley in~\cite{St}.

Now, it must be noted that the main objects of study in~\cite{Fu} are Newton--Okounkov bodies, toric degenerations are obtained somewhat indirectly via a general result of~\cite{A} relating the two notions. This project was initiated with the goal of finding a more direct approach in terms of explicit initial degenerations similar to the classical construction in~\cite{GL}. Recall that $F$, the variety of complete flags in $\bC^n$, is realized by the Pl\"ucker ideal $I$ in the polynomial ring in Pl\"ucker variables $X_{i_1,\dots,i_k}$. Meanwhile, the toric variety of $\cQ_O(\la)$ is realized by a toric ideal $I_O$ in the polynomial ring in variables $X_J$ labeled by order ideals $J$ in $P$. To obtain a toric initial degeneration of $F$ we may find an isomorphism between the two polynomial rings which would map $I_O$ to an initial ideal of $I$. The key challenge is then to define this isomorphism, the solution is provided by pipe dreams: a combinatorial rule for associating a permutation $w_M$ with every subset $M\subset P$.
\begin{customthm}{0}[cf.\ Theorem~\ref{degenmain}]\label{intro}
Fix $O\subset P$. For every order ideal $J$ one can choose $M_J\subset P$ and $k_J\in\mathbb N$ so that the map $\psi:X_J\mapsto X_{w_{M_J}(1),\dots,w_{M_J}(k_J)}$ is an isomorphism and $\psi(I_O)$ is an initial ideal of $I$. Consequently, the toric variety of $\cQ_O(\la)$ is a flat degeneration of $F$.
\end{customthm}

One reason for the popularity of toric degenerations is that they are always accompanied by a collection of other interesting objects: standard monomial theories, Newton--Okounkov bodies, PBW-monomial bases, etc. All of these can also be obtained from our construction, in particular, in this abstract we explain how PBW-monomial bases are obtained in type A. 
Consider the irreducible $\fsl_n$-representation $V_\la$ with highest-weight vector $v_\la$, let $f_{i,j}$ denote the negative root vectors. Then a basis in $V_\la$ is formed by the vectors $\prod f_{i,j}^{x_{i,j}}v_\la$ with $x$ ranging over the lattice points in $\xi(\cQ_O(\la))$ for a certain unimodular transformation $\xi$ (see Theorem~\ref{basismain}). 

We then discuss an extension of our approach to type C. Every integral dominant $\mathfrak{sp}_{2n}$-weight $\la$ and every subset $O$ of the type C GT poset also define an MCOP $\cQ_O(\la)$. Using a notion of type C pipe dreams (not to be confused with other known symplectic pipe-dream analogs) we state a type C counterpart of Theorem~\ref{intro}, see Theorem~\ref{degenmainC}. A notable new feature of this case is the intermediate degeneration $\widetilde F$ of the symplectic flag variety which happens to be a type A Schubert variety (Theorem~\ref{intermediate}, compare also~\cite{CL}). All of our toric degenerations are obtained as further degenerations of $\widetilde F$.

We also use type C to showcase another aspect of the theory: Newton--Okounkov bodies. Namely, we show how every $\cQ_O(\la)$ can be realized as a Newton--Okounkov body of the symplectic flag variety (Theorem~\ref{NOmain}).
To us this result is of particular interest because the paper~\cite{Fu} explains in detail why its methods do not extend to type C.

Complete proofs and further context for the results in type A can be found in~\cite{M3} (which also discusses an extension to semi-infinite Grassmannians), the paper~\cite{MM} covers type C (and also type B).

\section{Type A}

\subsection{Poset polytopes}

Choose an integer $n\ge 2$ and consider the set of pairs $P=\{(i,j)\}_{1\le i\le j\le n}$. We define a partial order $\prec$ on $P$ by setting $(i,j)\preceq(i',j')$ if and only if $i\le i'$ and $j\le j'$. The poset $(P,\prec)$ is sometimes referred to as the \textit{Gelfand--Tsetlin \textnormal{(or} GT\textnormal) poset}. 
We denote $A=\{(i,i)\}_{i\in[1,n]}\subset P$. Let $\cJ$ be the set of order ideals (lower sets) in $(P,\prec)$. For $k\in[0,n]$ let $\cJ_k\subset\cJ$ consist of $J$ such that $|J\cap A|=k$, i.e.\ $J$ contains $(1,1),\dots,(k,k)$ but not $(k+1,k+1)$.

We now associate a family of polytopes with this poset. Each polytope is determined by a subset of $P$ and a vector in $\bZ_{\ge 0}^{n-1}$. For $k\in[1,n-1]$ we let $\om_k$ denote the $k$th basis vector in $\bZ_{\ge 0}^{n-1}$.
\begin{definition}\label{mcopdef}
Consider a subset $O\subset P$ such that $A\subset O$. For $J\in\cJ$ consider the set \[M_O(J)=(J\cap O)\cup\max\nolimits_{\prec}(J)\] ($\max_\prec$ denotes the subset of $\prec$-maximal elements). Let $x_O(J)\in\bR^P$ denote the indicator vector $\mathbf 1_{M_O(J)}$. The \textit{marked chain-order polytope} (MCOP) $\cQ_O(\om_k)$ is the convex hull of $\{x_O(J)\}_{J\in\cJ_k}$. For $\la=(a_1,\dots,a_{n-1})\in\bZ_{\ge0}^{n-1}$ the MCOP $\cQ_O(\la)$ is the Minkowski sum \[a_1\cQ_O(\om_1)+\dots+a_{n-1}\cQ_O(\om_{n-1})\subset\bR^P.\]
\end{definition}

MCOPs were introduced in~\cite{FF,FFLP} in the generality of arbitrary finite posets. The original definition describes the polytope in terms of linear inequalities. The equivalence of the above approach is proved in~\cite[Subsection 3.5]{FM2}.

The first thing to note is that for $\la=(a_1,\dots,a_n)$ and any $x\in\cQ_O(\la)$ one has $x_{i,i}=a_i+\dots+a_{n-1}$. 
When $O=P$ one has $M_O(J)=J$. It follows that $\cQ_P(\la)$ consists of points $x$ with $x_{i,j}\ge x_{i',j'}$ whenever $(i,j)\preceq(i',j')$. Now, identify $\bZ^{n-1}$ with the lattice of integral $\fsl_n$-weights by letting $\om_k$ be the $k$th fundamental weight. Then $\cQ_P(\la)$ is the GT polytope of~\cite{GT} corresponding to the integral dominant weight $\la$ (i.e.\ $\la\in \bZ_{\ge0}^{n-1}$). If $O=A$, then $M_O(J)$ is the union of $J\cap A$ and the antichain $\max_\prec J$. One can check that $\cQ_A(\la)$ consists of points $x$ with all $x_{i,j}\ge 0$ and $\sum_{(i,j)\in K} x_{i,j}\le a_l+\dots+a_r$ for any chain $K\subset P\bs A$ starting in $(l,l+1)$ and ending in $(r,r+1)$. This is the FFLV polytope (\cite{FFL1}) of $\la$. Other MCOPs can be said to interpolate between these two cases.


Note that $|\cJ_k|={n\choose k}$ and, since $\cQ_O(\om_k)$ is a $0/1$-polytope, it has ${n\choose k}$ lattice points. More generally, a key property of MCOPs is that the number of lattice points in $\cQ_O(\la)$ does not depend on $O$ and, moreover, the polytopes with a given $\la$ are pairwise Ehrhart-equivalent (\cite[Corollary 2.5]{FFLP}). Now, it is well known that the number of lattice points in the GT or FFLV polytope of $\la$ is $\dim V_\la$ where $V_\la$ denotes the irreducible $\fsl_n(\bC)$-representation with highest weight $\la$. This immediately provides the following.
\begin{proposition}\label{intpoints}
For any $O$ and $\la$ we have $|\cQ_O(\la)\cap\bZ^P|=\dim V_\la$.
\end{proposition}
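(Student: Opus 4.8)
The plan is to combine the two facts recalled immediately before the statement, so the argument is essentially formal. First, by~\cite[Corollary 2.5]{FFLP} all marked chain-order polytopes $\cQ_O(\la)$ with a fixed $\la$ are Ehrhart-equivalent; evaluating the common Ehrhart polynomial at $1$ shows that $|\cQ_O(\la)\cap\bZ^P|$ does not depend on $O$ (here $\cQ_O(\la)$ is a lattice polytope, being a Minkowski sum of the $0/1$-polytopes $\cQ_O(\om_k)$, so there is no quasi-polynomial subtlety). Hence it suffices to compute this number for one convenient choice of $O$.

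Taking $O=P$, the polytope $\cQ_P(\la)$ is the Gelfand--Tsetlin polytope of $\la$ as observed above, so its lattice points are exactly the integral GT patterns whose top row is the partition determined by $\la$. These patterns index the Gelfand--Tsetlin basis of $V_\la$, whence $|\cQ_P(\la)\cap\bZ^P|=\dim V_\la$; this is the classical count going back to~\cite{GT}. (Alternatively one may take $O=A$ and invoke the FFLV basis of~\cite{FFL1}, which is enumerated by the lattice points of the FFLV polytope $\cQ_A(\la)$.) Combining the two steps gives the claim for every $O$, and as a sanity check it is consistent with the earlier observation that $\cQ_O(\om_k)$ has $\binom nk=\dim V_{\om_k}$ lattice points.

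The only non-formal ingredient is the Ehrhart-equivalence of the $\cQ_O(\la)$, which I take as given; its proof is a continuous piecewise-linear ``transfer map'' between the polytopes that generalizes Stanley's bijection between the order and chain polytopes of a poset, and restricting this map to lattice points yields exactly the equality of lattice-point counts needed here. So there is no genuine obstacle once that result is in hand; in fact one does not even need the full Ehrhart statement, only the $t=1$ count, i.e.\ a lattice-point-preserving piecewise-linear identification of $\cQ_P(\la)$ with $\cQ_O(\la)$.
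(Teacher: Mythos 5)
Your proof is correct and takes essentially the same approach as the paper: the statement is obtained immediately by combining the Ehrhart equivalence of the $\cQ_O(\la)$ from \cite[Corollary 2.5]{FFLP} with the classical count of lattice points in the GT (or equivalently FFLV) polytope. The additional remarks you include (lattice-polytope-ness of the Minkowski sum, the transfer-map intuition, the $\om_k$ sanity check) are accurate but not beyond what the paper already notes.
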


The polytope $\cQ_O(\om_k)$ is normal which means that the associated toric variety is embedded into $\bP(\bC^{\cJ_k})$. It is cut out by the kernel of the homomorphism from $\bC[X_J]_{J\in\cJ_k}$ to $\bC[P]=\bC[z_{i,j}]_{(i,j)\in P}$ mapping $X_J$ to $z^{x_O(J)}=\prod_{(i,j)\in P} z_{i,j}^{x_O(J)_{i,j}}$. For general $\la=(a_1,\dots,a_n)$ the definition of $\cQ_O(\la)$ implies that its normal fan and its toric variety (up to isomorphism) depend only on the set of $i$ for which $a_i>0$. Hence, for regular $\la$ (all $a_i>0$) the toric variety coincides with that of $\cQ_O(\om_1)+\dots+\cQ_O(\om_{n-1})$. The toric variety of a Minkowski sum has a standard multiprojective embedding. Consider the product \[\bP_\cJ=\bP(\bC^{\cJ_1})\times\dots\times\bP(\bC^{\cJ_{n-1}})\] and its multihomogeneous coordinate ring $\bC[\cJ]=\bC[X_J]_{J\in\cJ_1\cup\dots\cup\cJ_{n-1}}$. Let $I_O$ denote the kernel of the homomorphism $X_J\mapsto z^{x_O(J)}$ from $\bC[\cJ]$ to $\bC[P]$.
\begin{proposition}\label{hibiproj}
For regular $\la$ the toric variety of $\cQ_O(\la)$ is isomorphic to the zero set of $I_O$~in~$\bP_\cJ$.
\end{proposition}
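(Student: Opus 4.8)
First I would invoke the observation made after Definition~\ref{mcopdef}: for regular $\la$ the toric variety $X$ of $\cQ_O(\la)$ coincides with that of the Minkowski sum $\cQ_O(\om_1)+\dots+\cQ_O(\om_{n-1})$, and so carries globally generated line bundles $\cL_1,\dots,\cL_{n-1}$ with $\cL_k$ attached to the summand $\cQ_O(\om_k)$ and with $\cL_1\otimes\dots\otimes\cL_{n-1}$ ample on $X$. By the standard description of global sections of line bundles on toric varieties, $H^0(X,\cL_k)=\bigoplus_{p\in\cQ_O(\om_k)\cap\bZ^P}\bC\,z^p$; since $\cQ_O(\om_k)$ is a $0/1$-polytope this space has basis $\{z^{x_O(J)}\}_{J\in\cJ_k}$ and dimension ${n\choose k}=|\cJ_k|$, so the complete linear system $|\cL_k|$ defines a morphism $\varphi_k\colon X\to\bP(\bC^{\cJ_k})$. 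Put $\varphi=\varphi_1\times\dots\times\varphi_{n-1}\colon X\to\bP_\cJ$.

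Next I would identify the image. On the dense torus of $X$ the morphism $\varphi$ is $z\mapsto\big((z^{x_O(J)})_{J\in\cJ_1},\dots,(z^{x_O(J)})_{J\in\cJ_{n-1}}\big)$, and since $X$ is irreducible and projective and $\varphi$ is proper, $\varphi(X)$ is the closure of this torus orbit. By the standard description of such closures --- equivalently, taking $\MultiProj$ of the multigraded subalgebra of $\bC[P]$ generated by the $z^{x_O(J)}$ placed in their respective degrees --- this closure is the zero set of the kernel of $\bC[\cJ]\to\bC[P]$, $X_J\mapsto z^{x_O(J)}$. That kernel is multihomogeneous, since the $(i,i)$-coordinates of the $x_O(J)$ recover the multidegree, and by definition it is exactly $I_O$. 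Hence $\varphi(X)=V(I_O)$, and it remains to show that $\varphi\colon X\to V(I_O)$ is an isomorphism.

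The morphism $\varphi$ is birational onto its image, because the dense torus of $X$ maps isomorphically onto the dense torus of $V(I_O)$. It is also finite: $\varphi^*$ of the very ample class $\cO(1,\dots,1)$ on $\bP_\cJ$ equals $\cL_1\otimes\dots\otimes\cL_{n-1}$, which is ample, so no positive-dimensional subvariety of $X$ can be contracted, and $\varphi$, being also proper, is finite. Since $X$ is a normal variety, $\varphi$ thus exhibits $X$ as the normalization of $V(I_O)$, so $\varphi$ is an isomorphism exactly when $V(I_O)$ is itself normal. Equivalently, one must show that for every $\bd\in\bZ_{\ge 0}^{n-1}$ each lattice point of $\cQ_O(\bd)=\sum_k d_k\cQ_O(\om_k)$ is a sum of points $x_O(J)$ that uses, for each $k$, exactly $d_k$ ideals from $\cJ_k$; that is, the family $\{\cQ_O(\om_k)\}_k$ is integrally closed. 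This decomposition statement is the heart of the argument and the only step that is not formal toric geometry: for $O=P$ it amounts to the classical normality of Hibi rings, and for general $O$ it can be obtained from MCOP-specific input, such as the Ehrhart equivalence of marked chain--order polytopes (\cite[Corollary 2.5]{FFLP}) combined with the normality of the summands, or by a direct construction of the decomposition from the nested family of order ideals that encodes the lattice point. I expect this to be the main obstacle.
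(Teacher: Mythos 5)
Your reduction is the standard one and matches the paper's intent: the toric variety $X$ of the normal fan of $\cQ_O(\la)$ maps to $\bP_\cJ$ via the nef line bundles attached to the summands, the image is $V(I_O)$, the morphism is finite (ample pullback) and birational, and since $X$ is normal, $\varphi$ is an isomorphism precisely when $V(I_O)$ is normal --- equivalently, when the multigraded semigroup algebra $\bC[z^{x_O(J)}]_{J\in\cJ}\cong\bC[\cJ]/I_O$ is normal, i.e.\ the family $\{\cQ_O(\om_k)\}_k$ has the integer decomposition property. You correctly isolate this as the crux and the only non-formal step.

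However, the route you tentatively sketch does not close the gap. Ehrhart equivalence gives $|\cQ_O(\bd)\cap\bZ^P|=|\cQ_P(\bd)\cap\bZ^P|$ for every $\bd$, but the number of \emph{decomposable} lattice points of $\cQ_O(\bd)$ is the Hilbert function of $\bC[z^{x_O(J)}]$ in degree $\bd$, which is a priori only bounded above by the lattice-point count: distinct Minkowski decompositions of the same lattice point produce the same monomial, so Ehrhart equivalence together with normality of the individual summands does not by itself yield the needed equality. The integer decomposition property for the MCOP family is a genuine theorem about marked chain--order polytopes; the paper treats it as known, folding it into the phrase ``the toric variety of a Minkowski sum has a standard multiprojective embedding''. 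Alternatively it can be extracted a posteriori from the sagbi-basis argument in the proof of Theorem~\ref{degenmain}: once $\bC[\cJ]/I_O$ is identified (via $\xi$ and $\ll$) with the initial subalgebra $\initial_\ll\varphi(S)$ of the Pl\"ucker algebra, its Hilbert function in degree $\bd$ equals $\dim V_\bd$, and by Proposition~\ref{intpoints} that count forces every lattice point of $\cQ_O(\bd)$ to decompose. With either input your argument is complete.
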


\subsection{Pipe dreams}

Consider the permutation group $\mathcal S_n$ and for $(i,j)\in P$ let $s_{i,j}$ denote the transposition $(i,j)\in\mathcal S_n$. In particular, $s_{i,i}$ is always the identity.
\begin{definition}
For any subset $M\subset P$ let $w_M\in\mathcal S_n$ denote the product of all $s_{i,j}$ with $(i,j)\in M$ ordered first by $i$ increasing from left to right and then by $j$ increasing from left to right.
\end{definition}

Note that $w_M$ is determined by $M\backslash A$ but it is convenient for us to consider subsets of $P$ rather than $P\bs A$. The term \textit{pipe dream} is due to~\cite{KnM} and refers to a certain diagrammatic interpretation of this correspondence between subsets of $P$ and permutations. The poset $P$ can be visualized as a triangle as shown in~\eqref{pipedreamfig} for $n=4$. In these terms the pipe dream corresponding to $M$ consists of $n$ polygonal curves or \textit{pipes} described as follows. The $i$th pipe enters the element $(i,n)$ from the bottom-right, continues in this direction until it reaches an element of $M\cup A$, after which it turns left and continues going to the bottom-left until it reaches an element of $M$, after which it turns right and again continues to the top-right until it reaches an element of $M\cup A$, etc. The last element passed by the pipe will then be $(1,w_M(i))$. 

The pipe dream of the set $M=\{(1,1),(2,2),(1,2),(2,3),(1,4)\}$ is shown below, here each pipe is shown in its own colour. Indeed, $s_{1,1}s_{1,2}s_{1,4}s_{2,2}s_{2,3}=(4,3,1,2)$.
\begin{equation}\label{pipedreamfig}
\begin{tikzcd}[row sep=tiny,column sep=0]
&(1,1)\ar[blue]{ld}\ar[blue]{ld}&&(2,2)\ar[blue]{ld}&&\color{lightgray}{(3,3)}\ar[blue]{ld}&&\color{lightgray}{(4,4)}\ar[cyan]{ld}\\
\phantom{(1,1)}&&(1,2)\ar[blue]{lu}\ar[cyan]{ld}&&(2,3)\ar[green]{ld}\ar[blue]{lu}&&\color{lightgray}{(3,4)}\ar[blue]{lu}\ar[cyan]{ld}&&\phantom{(1,1)}\ar[cyan]{lu}\\
&\phantom{(1,1)}&&\color{lightgray}{(1,3)}\ar[green]{ld}\ar[cyan]{lu}&&\color{lightgray}{(2,4)}\ar[green]{lu}\ar[cyan]{ld}&&\phantom{(1,1)}\ar[blue]{lu}\\
&&\phantom{(1,1)}&&(1,4)\arrow[red]{ld}\ar[cyan]{lu}&&\phantom{(1,1)}\ar[green]{lu}\\
&&&\phantom{(1,1)}&&\phantom{(1,1)}\arrow[red]{lu}
\end{tikzcd}
\end{equation}

We will use a ``twisted'' version of the correspondence depending on $O\subset P$. For $M\subset P$ we denote $w^O_M=w_O^{-1}w_M$. Diagrammatically, $w^O_M(i)=j$ if the $i$th pipe of the pipe dream of $M$ ends in the same element as the $j$th pipe of the pipe dream of $O$.

\subsection{Toric degenerations}

For a polynomial ring $\bC[x_a]_{a\in A}$ a \textit{monomial order} $<$ on $\bC[x_a]_{a\in A}$ is a partial order on the set of monomials that is multiplicative ($M_1<M_2$ if and only if $M_1x_a<M_2x_a$) and weak (incomparability is an equivalence relation). For such an order and a polynomial $p\in \bC[x_a]_{a\in A}$ its \textit{initial part} $\initial_< p$ is equal to the sum of those monomials occurring in $p$ which are maximal with respect to $<$, taken with the same coefficients as in $p$. For a subspace $U\subset\bC[x_a]_{a\in A}$ its \textit{initial subspace} $\initial_< U$ is the linear span of all $\initial_< p$ with $p\in U$. The initial subspace of an ideal is an ideal (the \textit{initial ideal}), the initial subspace of a subalgebra is a subalgebra (the \textit{initial subalgebra}). One key property of initial ideals and initial subalgebras is that they define flat degenerations, we explain this phenomenon in the context of flag varieties.

For $n\ge 2$ let $F$ be the variety of complete flags in $\bC^n$. The Pl\"ucker embedding realizes $F$ as a subvariety in \[\bP=\bP(\wedge^1\bC^n)\times\dots\times\bP(\wedge^{n-1}\bC^n).\] The multihomogeneous coordinate ring of $\bP$ is $S=\bC[X_{i_1,\dots,i_k}]_{k\in[1,n-1],1\le i_1<\dots<i_k\le n}$ and $F$ is cut out in $\bP$ by the \textit{Pl\"ucker ideal} $I\subset S$ which can be defined as follows. Consider the $n\times n$ matrix $Z$ with $Z_{i,j}=z_{i,j}$ if $i\le j$ and $Z_{i,j}=0$ otherwise. Denote by $D_{i_1,\dots,i_k}$ the minor of $Z$ spanned by rows $1,\dots,k$ and columns $i_1,\dots,i_k$. Then $I$ is the kernel of the homomorphism $\varphi:X_{i_1,\dots,i_k}\mapsto D_{i_1,\dots,i_k}$ from $S$ to $\bC[P]$. One can also equip $S$ with a $\bZ^{n-1}$-grading $\grad$ with $\grad X_{i_1,\dots,i_k}=\om_k$ and characterize $F$ as $\MultiProj{S/I}$ with respect to the induced $\bZ^{n-1}$-grading. The following fact is essentially classical, for the context of partial monomial orders see~\cite{KNN} (where an algebraic wording is given).
\begin{proposition}
For a monomial order $<$ on $S$ the scheme $\MultiProj{S/\initial_< I}$ (i.e.\ the zero set of $\initial_< I$ in $\bP$ if the scheme is reduced) is a flat degeneration of $F$: there exists a flat family $\mathcal F\to\mathbb A^1$ with fiber over 0 isomorphic to $\MultiProj{S/\initial_< I}$ and all other fibers isomorphic to $F$.
\end{proposition}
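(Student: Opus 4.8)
The plan is to run the standard Gr\"obner/Rees-algebra degeneration argument, with two points requiring care: the order $<$ is only a \emph{weak} (partial) order, and we must work with the multihomogeneous coordinate ring $S$ and with $\MultiProj$ rather than with an affine cone.

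The first step is to replace $<$ by an integral weight vector. Since $I$ is finitely generated it has a finite Gr\"obner basis $p_1,\dots,p_r$ with respect to $<$; the finiteness of such a basis (and of the associated Gr\"obner fan) in the setting of partial monomial orders is the algebraic content of \cite{KNN}. Each $p_i$ involves only finitely many monomials, among which $\initial_<p_i$ picks out the $<$-maximal ones. One then chooses a weight vector $\mathbf w$, i.e.\ an assignment of an integer weight to each Pl\"ucker variable $X_{i_1,\dots,i_k}$, lying in the relative interior of the corresponding cone of the Gr\"obner fan, so that for every $i$ the $\mathbf w$-maximal monomials of $p_i$ are exactly the $<$-maximal ones. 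For such a $\mathbf w$ one has $\initial_{\mathbf w}I=\initial_< I$, so it suffices to construct the family for the weight degeneration given by $\mathbf w$.

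The second step is the Rees-type construction. Introduce a variable $t$ of $\grad$-degree $0$ and homogenize with respect to $\mathbf w$: for $p=\sum_m c_m m\in S$ put $\tilde p=\sum_m c_m\, t^{D-\mathbf w(m)}m\in S[t]$, where $D=\max\{\mathbf w(m):c_m\neq0\}$, and let $\tilde I\subset S[t]$ be the ideal generated by all $\tilde p$, $p\in I$ (equivalently by $\tilde p_1,\dots,\tilde p_r$). The algebra $R=S[t]/\tilde I$ inherits the $\bZ^{n-1}$-grading $\grad$, since $t$ is $\grad$-homogeneous of degree $0$ and $\tilde I$ is $\grad$-homogeneous because $I$ is. The crucial point is that $t$ is a nonzerodivisor on $R$: the homogenization ideal is $t$-saturated, $(\tilde I:t)=\tilde I$. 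Hence $R$ is $t$-torsion-free, and being torsion-free over the principal ideal domain $\bC[t]$ it is flat over $\bC[t]$; moreover each graded piece $R_{\mathbf d}$ is a direct $\bC[t]$-summand of $R$, finitely generated and torsion-free, hence free, of rank equal to the multigraded Hilbert function value $\dim_\bC(S/I)_{\mathbf d}$ (which is therefore the same in every fiber).

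The third step is to identify the fibers and descend to $\MultiProj$. Setting $t=0$ yields $R/tR\cong S/\initial_{\mathbf w}I=S/\initial_< I$, while setting $t=a$ for $a\in\bC^\times$ yields $R/(t-a)R\cong S/I$ via the $\grad$-graded automorphism of $S$ rescaling each variable by the power of $a$ dictated by $\mathbf w$, which undoes the homogenization. Now let $\mathcal F=\MultiProj R$ with its structure map to $\Spec\bC[t]=\mathbb A^1$; since $R$ is a finitely generated $\grad$-graded $\bC[t]$-algebra, generated over $R_0=\bC[t]$ in the degrees $\om_1,\dots,\om_{n-1}$, this realizes $\mathcal F$ as a closed subscheme of $\bP\times\mathbb A^1$, proper over $\mathbb A^1$. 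Freeness of the graded pieces makes $\mathcal F\to\mathbb A^1$ flat and makes $\MultiProj$ commute with base change along $\mathbb A^1$, so the fiber over $0$ is $\MultiProj(S/\initial_< I)$ and the fiber over any $a\neq0$ is $\MultiProj(S/I)=F$, which is the assertion.

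The step I expect to be the main obstacle is the bookkeeping around the weakness of $<$: showing that a single integral weight vector $\mathbf w$ simultaneously realizes the partial initial parts of a whole Gr\"obner basis and that this gives $\initial_{\mathbf w}I=\initial_< I$ on the nose (not merely a containment). This is precisely the place where one invokes the finiteness results for partial monomial orders of \cite{KNN}. The remaining ingredients --- $t$-saturatedness of the homogenization ideal, torsion-free $\Rightarrow$ flat over $\bC[t]$, and flatness of the $\MultiProj$ of a graded-flat algebra --- are standard and essentially formal.
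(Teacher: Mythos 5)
The paper does not actually supply a proof of this proposition: it is stated as ``essentially classical,'' with a pointer to~\cite{KNN} for the treatment of partial (weak) monomial orders. So there is no internal argument to compare against; the question is only whether your proposal is a correct rendering of the standard argument the authors have in mind. It is. You identify the three genuine ingredients and assemble them in the right order: (i) replacing the weak order $<$ by an integral weight vector $\mathbf w$ on a Gr\"obner basis so that $\initial_{\mathbf w}I=\initial_<I$, which is exactly where the partial-order subtleties from~\cite{KNN} enter; (ii) the $t$-homogenization / Rees construction with the $t$-saturation and torsion-free-over-$\bC[t]$-implies-flat argument; (iii) identifying the fibers and passing from graded flatness of $R$ to flatness of $\MultiProj R\to\mathbb A^1$ using that $R$ is generated over $R_0=\bC[t]$ in the fundamental degrees, so that $\MultiProj R$ sits as a closed subscheme of $\bP\times\mathbb A^1$ and formation of $\MultiProj$ commutes with the base change.

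Two small points worth tightening if you write this up in full. First, in step (i) it is not enough to choose $\mathbf w$ so that $\initial_{\mathbf w}p_i=\initial_<p_i$ on each Gr\"obner basis element; you should also say explicitly that a reduced Gr\"obner basis for $<$ with this property is automatically a Gr\"obner basis for $\mathbf w$, whence the initial ideals coincide (this is a one-line lemma, but it is precisely the ``on the nose, not merely a containment'' issue you flag). Second, the existence of a rational $\mathbf w$ realizing $\initial_<$ is where the assumption that $<$ is \emph{weak} (incomparability transitive) and \emph{multiplicative} is actually used: such an order is induced by a group homomorphism from $\bZ^A$ to a totally ordered abelian group, and the relevant Gr\"obner cone is therefore nonempty with rational points in its relative interior. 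Everything else in your argument is, as you say, formal, and matches what the paper is implicitly invoking.
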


Now fix $O\subset P$ containing $A$. For $J\in\cJ$ denote $w^O_{M_O(J)}=w^J$. The key ingredient of our first main result is a homomorphism $\psi:\bC[\cJ]\to S$. To define $\psi$ for $J\in\cJ_k$ we set \[\psi(X_J)=X_{w^J(1),\dots,w^J(k)}\] where we use the convention $X_{i_1,\dots,i_k}=(-1)^\sigma X_{i_{\sigma(1)},\dots,i_{\sigma(k)}}$ for $\sigma\in\mathcal S_k$. The map $\psi$ encodes a correspondence $J\mapsto (w^J(1),\dots,w^J(k))$ between order ideals and tuples. When $O=P$ the tuples obtained in this way are precisely the increasing tuples. When $O=A$ one obtains the \textit{PBW tuples} defined in~\cite{Fe}. In general, every subset of $[1,n]$ is represented by exactly one of the obtained tuples, this means that $\psi$ is an isomorphism.

\begin{thm}\label{degenmain}
The map $\psi$ is an isomorphism and there exists a monomial order $<$ on $S$ such that $\psi(I_O)=\initial_< I$. In particular, for regular $\la$ the toric variety of $\cQ_O(\la)$ is a flat degeneration of $F$.
\end{thm}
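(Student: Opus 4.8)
The plan is to prove the two assertions of Theorem~\ref{degenmain} in sequence: first that $\psi$ is a well-defined isomorphism of multigraded algebras, then that there is a monomial order $<$ with $\psi(I_O)=\initial_<I$; the flat degeneration statement for regular $\la$ is then immediate from the preceding proposition together with Proposition~\ref{hibiproj}, since $\MultiProj{S/\initial_<I}=\MultiProj{S/\psi(I_O)}=\MultiProj{\bC[\cJ]/I_O}$ is exactly the toric variety of $\cQ_O(\la)$.

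For the isomorphism claim I would argue as follows. Since $\psi$ sends the variable $X_J$ (for $J\in\cJ_k$) to $\pm X_{w^J(1),\dots,w^J(k)}$, and since $\bC[\cJ]$ and $S$ are polynomial rings whose variables are indexed, in multidegree $\om_k$, by $\cJ_k$ on one side and by $k$-subsets of $[1,n]$ on the other, it suffices to check that the map $J\mapsto\{w^J(1),\dots,w^J(k)\}$ is a bijection from $\cJ_k$ to the $k$-subsets of $[1,n]$ for each $k$. Both sets have cardinality $\binom nk$, so injectivity suffices. Here I would use the pipe-dream description of $w^O_{M_O(J)}$: the set $\{w^J(1),\dots,w^J(k)\}$ records which pipes of the pipe dream of $M_O(J)$ terminate in the same top-row cells as the first $k$ pipes of the pipe dream of $O$. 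One recovers the order ideal $J$ from this data by tracking where the pipes cross, using the fact that $M_O(J)=(J\cap O)\cup\max_\prec(J)$ is built canonically from $J$; concretely, the position at which pipe $i$ makes its turns is governed by $M_O(J)$, and from the pattern of crossings one reconstructs $J\cap A$ (which records $k$) and then $J$ itself. I expect this reconstruction to be the combinatorial heart of the first half, but it is essentially bookkeeping with the pipe-dream rules stated in the excerpt, and the special cases $O=P$ (increasing tuples) and $O=A$ (PBW tuples) serve as sanity checks.

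For the second assertion the strategy is the standard one for establishing that a toric ideal is an initial ideal of the Pl\"ucker ideal: exhibit a weight vector (a linear functional on the monomials of $S$, possibly with ties broken by a secondary order) under which (a) the image $\varphi(X_{w^J(1),\dots,w^J(k)})=\pm D_{w^J(1),\dots,w^J(k)}$ of each Pl\"ucker variable, when expanded as a polynomial in the $z_{i,j}$, has a \emph{unique} lowest (or highest) term, and that term is the monomial $z^{x_O(J)}$ up to sign; and (b) the resulting assignment of a leading monomial to each $X_J$ is compatible with the toric relations, i.e.\ $\initial_<I$ contains $\psi(I_O)$. Concretely, one first chooses a monomial order $<_z$ on $\bC[P]$ so that the initial term of the minor $D_{w^J(1),\dots,w^J(k)}$ equals $\pm z^{x_O(J)}$; this is where the definition $M_O(J)=(J\cap O)\cup\max_\prec(J)$ and the shape of the matrix $Z$ (upper triangular, entries $z_{i,j}$) must conspire, and the diagonal/antidiagonal-type term selected by the permutation $w^J$ must be read off correctly — this is the crucial computation. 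Pulling $<_z$ back along $\varphi$ and using $\psi=\varphi^{-1}\circ(\text{the minor map on }\bC[\cJ])$ in the appropriate sense, one gets that $\initial_<\varphi(X_J\text{-monomial})$ matches $z^{(\text{sum of }x_O(J)\text{'s})}$, so $\psi(X_J)$ has leading term detecting $x_O(J)$; hence $\psi$ carries the toric generators of $I_O$ (binomials $\prod X_{J}-\prod X_{J'}$ with $\sum x_O(J)=\sum x_O(J')$) into $I$ with the two monomials tied, placing their difference — up to lower-order terms — into $\initial_<I$. Finally, a dimension/flatness count closes the argument: $\psi(I_O)\subseteq\initial_<I$ together with $\MultiProj{\bC[\cJ]/I_O}$ having the same Hilbert function as $F$ (which follows from Proposition~\ref{intpoints}, since the multigraded dimensions on both sides count lattice points of $\cQ_O(\la)$, equivalently $\dim V_\la$) forces equality $\psi(I_O)=\initial_<I$.

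The main obstacle I anticipate is part (a) of the second step: pinning down the monomial order $<_z$ on $\bC[P]$ (or the weight vector) for which every relevant minor $D_{w^J(1),\dots,w^J(k)}$ has its initial term equal to precisely $\pm z^{x_O(J)}=\pm\prod_{(i,j)\in M_O(J)}z_{i,j}$. Getting this requires understanding, for the permutation $w^J=w_O^{-1}w_{M_O(J)}$, which permutation $\sigma\in\mathcal S_k$ contributes the leading diagonal product $\prod_l z_{l,w^J(\sigma(l))}$ — and why that product's exponent vector is the indicator of $M_O(J)$ rather than of some other subset. This is genuinely where the pipe-dream construction earns its keep: the turning rule ("go until you hit $M\cup A$, turn, go until you hit $M$, turn, \dots") is exactly engineered so that the pipe from $(i,n)$ traces out a lattice path whose turning points are the elements of $M_O(J)\cup A$, and this path encodes the term of the minor that survives in the initial part. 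Verifying this cleanly — ideally by induction on $n$ or on $|M\setminus A|$, peeling off one pipe or one transposition at a time — is the technical core, and everything else (the isomorphism, the Hilbert-function comparison, the flatness conclusion) is comparatively formal.
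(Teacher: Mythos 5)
Your outline is broadly aligned with the paper's proof, but there is a genuine gap in your central step~(a): the initial term of the minor $D_{w^J(1),\dots,w^J(k)}$ cannot be $\pm z^{x_O(J)}$ for any monomial order. Every monomial occurring in a $k\times k$ minor of $Z$ taken from rows $1,\dots,k$ has a squarefree exponent vector of the form $\mathbf 1_{\{(1,j_1),\dots,(k,j_k)\}}$, i.e.\ exactly one variable per row, whereas $x_O(J)=\mathbf 1_{M_O(J)}$ with $M_O(J)=(J\cap O)\cup\max_\prec J$, which generically has several elements in a single row and more than $k$ elements altogether. So $z^{x_O(J)}$ is simply not a term of that minor. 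What bridges this gap in the paper is a unimodular transformation $\xi\in SL(\bZ^P)$ with $\xi(x_O(J))=\mathbf 1_{\{(1,w^J(1)),\dots,(k,w^J(k))\}}$ for all $J\in\cJ_k$. This is the missing ingredient, and it carries the rest of the argument: since $\xi$ is a lattice automorphism, the toric ideal $I_O=\ker\bigl(X_J\mapsto z^{x_O(J)}\bigr)$ coincides with $\ker\bigl(X_J\mapsto z_{1,w^J(1)}\cdots z_{k,w^J(k)}\bigr)$, and the latter monomials are actual candidate leading terms of the minors. As a free bonus, injectivity of $\xi$ together with distinctness of the $x_O(J)$ immediately yields that the sets $\{w^J(1),\dots,w^J(k)\}$ are pairwise distinct, so the isomorphism claim drops out without the pipe-dream reconstruction you gestured at.

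Once $\xi$ is in place, your remaining plan largely matches the paper's: choose a (lexicographic) monomial order $\ll$ on $\bC[P]$ so that $\initial_\ll D_{w^J(1),\dots,w^J(k)}=z_{1,w^J(1)}\cdots z_{k,w^J(k)}$, and transfer it to $S$. Where you diverge is in the finishing move: you propose a Hilbert-function comparison using Proposition~\ref{intpoints}, while the paper asserts that the minors form a sagbi basis for $\varphi(S)$ under $\ll$ (so that the initial terms generate $\initial_\ll\varphi(S)$) and then pulls $\ll$ back to a partial monomial order $<$ on $S$ by standard sagbi/Khovanskii-basis machinery. These two routes are essentially equivalent in strength — indeed the sagbi claim is usually itself certified by exactly such a dimension count — so that divergence is fine. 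The point to internalize is that $\xi$ is not a cosmetic normalization: without the unimodular change of coordinates your step~(a) is false as stated and the argument does not close.
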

\begin{proof}[Sketch of proof]
It can be checked that $w^J(i)\ge i$ for $J\in\cJ_k$ and $i\in[1,k]$. Furthermore, there exists a unimodular transformation $\xi\in SL(\bZ^P)$ such that for any $J\in\cJ_k$ one has \[\xi(x_O(J))=\mathbf 1_{\{(1,w^J(1)),\dots,(k,w^J(k))\}}.\] This means that $I_O$ is the kernel of the homomorphism $X_J\mapsto z_{1,w^J(1)}\dots z_{k,w^J(k)}$.

Using pipe dreams one can define a lexicographic monomial order $\ll$ on $\bC[P]$ so that 
\[
\initial_\ll D_{w^J(1),\dots,w^J(k)}=z_{1,w^J(1)}\dots z_{k,w^J(k)}
\]
for $J\in\cJ_k$. Since $\xi$ is bijective, the right-hand sides are distinct monomials for distinct $J$, hence the sets $\{w^J(1),\dots,w^J(k)\}$ are also pairwise distinct. This provides the isomorphism claim. Note that $\psi(I_O)$ is the kernel of $X_{i_1,\dots,i_k}\mapsto\initial_\ll D_{i_1,\dots,i_k}$. Moreover, the $\initial_\ll D_{i_1,\dots,i_k}$ generate the initial subalgebra $\initial_\ll\varphi(S)$ (i.e.\ the determinants form a \textit{sagbi basis}). By general properties of initial degenerations, $\ll$ can now be pulled back to a monomial order $<$ on $S$ with the desired property.
\end{proof}

In fact, we could define $\psi$ using the ``untwisted'' permutation $w_{M(J)}$ and Theorem~\ref{degenmain} would still hold since $I$ is invariant under $\mathcal S_n$. However, we consider $w^J$ the natural choice because of the property $w^J(i)\ge i$, $i\in[1,k]$ which is also crucial in the next~subsection.

\subsection{PBW-monomial bases}

The map $\xi$ considered in the proof sketch of Theorem~\ref{degenmain} maps $\cQ_O(\om_k)$ to $\Pi_O(\om_k)$: the convex hull of all $\mathbf 1_{\{(1,w^J(1)),\dots,(k,w^J(k))\}}$ with $J\in\cJ_k$. For $\la=(a_1,\dots,a_n)$ let $\Pi_O(\la)$ denote the image $\xi(\cQ_O(\la))$. It equals the Minkowski sum $a_1\Pi_O(\om_1)+\dots+a_{n-1}\Pi_O(\om_{n-1})$. 

Next, let us recall some standard Lie-theoretic notation. We have identified $\bZ^{n-1}$ with the lattice of integral $\fsl_n$-weights, let $\alpha_1,\dots,\alpha_{n-1}\in\bZ^{n-1}$ denote the simple roots, i.e.\ $\alpha_i=2\om_i-\om_{i-1}-\om_{i+1}$ where $\om_0=\om_n=0$. The positive roots are then $\alpha_{i,j}=\alpha_i+\dots+\alpha_{j-1}$ with $1\le i<j\le n$. Let $f_{i,j}\in\fsl_n(\bC)$ denote the negative root vector of weight $-\alpha_{i,j}$. For $x\in\bZ_{\ge 0}^P$ we write $f^x$ to denote the PBW monomial $\prod_{(i,j)\in P\bs A} f_{i,j}^{x_{i,j}}$ in $\cU(\fsl_n(\bC))$ ordered first by $i$ increasing from left to right and then by $j$ increasing from left to right. Finally, let $v_\la$ denote a chosen highest-weight vector in $V_\la$. Another of our main results is as follows.
\begin{thm}\label{basismain}
The vectors $f^xv_\la$ with $x\in\Pi_O(\la)\cap\bZ^P$ form a basis in $V_\la$.  
\end{thm}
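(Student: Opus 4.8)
We need to show that $\{f^x v_\lambda : x \in \Pi_O(\lambda) \cap \mathbb{Z}^P\}$ is a basis of $V_\lambda$. Let me think about the structure.

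First, the cardinality: $|\Pi_O(\lambda) \cap \mathbb{Z}^P| = |\xi(\mathcal{Q}_O(\lambda)) \cap \mathbb{Z}^P|$. Since $\xi \in SL(\mathbb{Z}^P)$ is unimodular, it preserves lattice points, so this equals $|\mathcal{Q}_O(\lambda) \cap \mathbb{Z}^P| = \dim V_\lambda$ by Proposition~\ref{intpoints}. So we just need spanning (equivalently, linear independence).

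The natural strategy: use the toric degeneration from Theorem~\ref{degenmain} to transport a known monomial basis. There should be a relationship to the FFLV case ($O = A$), where the vectors $f^x v_\lambda$ for $x$ in the FFLV polytope form a basis (Feigin–Fourier–Littelmann). The key insight is probably that the toric ideal $I_O = \psi^{-1}(\mathrm{in}_< I)$ and all MCOPs with the same $\lambda$ are Ehrhart-equivalent — so the *number* is right, but we need the actual vectors.

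**Approach via the associated graded / degeneration.** Consider the PBW filtration on $V_\lambda$ — or rather, consider the degeneration of $F$ into the toric variety $X_{\mathcal{Q}_O(\lambda)}$. The multihomogeneous coordinate ring $S/I$ degenerates to $S/\mathrm{in}_< I \cong \mathbb{C}[\mathcal{J}]/I_O$, the semigroup ring of $\mathcal{Q}_O(\lambda)$ (suitably graded). Pass to sections of line bundles: the degree-$\lambda$ graded piece of $S/I$ is $V_\lambda^*$ (Borel–Weil), and it degenerates flatly to the degree-$\lambda$ piece of the semigroup ring, which has a basis indexed by $\mathcal{Q}_O(\lambda) \cap \mathbb{Z}^P$, i.e. by $\Pi_O(\lambda) \cap \mathbb{Z}^P$ after applying $\xi$.

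Now I want to identify these lattice-point-indexed basis vectors with the $f^x v_\lambda$. Here's where the structure of $w^J$ and the property $w^J(i) \geq i$ matter. The map $\psi(X_J) = X_{w^J(1),\dots,w^J(k)}$ sends Plücker variables to Plücker variables; applying $\xi$, the monomial $z_{1,w^J(1)}\cdots z_{k,w^J(k)}$ is the initial term (under $\ll$) of the minor $D_{w^J(1),\dots,w^J(k)}$. The connection to $f^x v_\lambda$ should come from the standard fact that in the PBW-degenerate setting, monomials $f^x v_\lambda$ correspond to certain multidegree monomials in the $z_{i,j}$ via the realization of $V_\lambda^*$ inside functions on the big cell, with $f_{i,j}$ acting as $\partial/\partial z_{i,j}$ roughly speaking. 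More precisely: the FFLV basis arises exactly this way from $O = A$, and for general $O$ one should check that $\mathrm{in}_\ll$ of the image of $f^x v_\lambda$ under the realization $V_\lambda \hookrightarrow (\text{functions on } N^-)$ has leading monomial $z^x$ — using that $\xi$ was *constructed* to make $\xi(x_O(J)) = \mathbf 1_{\{(1,w^J(1)),\dots,(k,w^J(k))\}}$, and that the toric variety's semigroup is generated by these.

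**The plan, concretely.** (1) Count: $|\Pi_O(\lambda)\cap\mathbb{Z}^P| = \dim V_\lambda$ via unimodularity of $\xi$ and Proposition~\ref{intpoints} — so it suffices to prove the $f^x v_\lambda$ span $V_\lambda$, or are linearly independent. (2) Realize $V_\lambda^*$ as the degree-$\lambda$ component of $S/I$ and use Theorem~\ref{degenmain}: $\mathrm{in}_<$ sends this to the degree-$\lambda$ component of the semigroup ring of $\Pi_O(\lambda)$, with basis $\{z^x\}_{x \in \Pi_O(\lambda)\cap\mathbb{Z}^P}$. (3) Show this basis lifts to $\{f^x v_\lambda\}$ (up to triangular change of basis): define a $\mathbb{Z}^P$-valued valuation/order on $V_\lambda$ (pulled back from $\ll$ on $\mathbb{C}[P]$ via the embedding into functions on $N^-$), verify the valuation of $f^x v_\lambda$ equals $x$ for $x$ in the polytope, and that these exhaust the value semigroup — then the associated-graded basis lifts. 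The FFLV case ($O=A$) is the template; the general case differs only in the choice of $\xi$ and $w^J$, which were set up precisely to make the bookkeeping go through.

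**Main obstacle.** The hard part is step (3): pinning down that the valuation (leading-term) of $f^x v_\lambda$ in the chosen order is exactly $x$, and that no two $x \neq x'$ in $\Pi_O(\lambda) \cap \mathbb{Z}^P$ give the same leading term. This requires understanding how the $\ll$-leading term of a product $f^x$ acting on $v_\lambda$ interacts with the pipe-dream combinatorics — essentially a triangularity statement for the action of PBW monomials with respect to the $w^J$-indexed monomial basis. For $O = P$ (GT) and $O = A$ (FFLV) this is classical, but interpolating requires the explicit unimodular $\xi$ and the inequality $w^J(i) \geq i$; verifying the triangularity in the general MCOP case is where the real work lies, and it is likely handled by induction on the dominance order of lattice points inside $\Pi_O(\lambda)$ combined with the straightening/commutation relations among the $f_{i,j}$.
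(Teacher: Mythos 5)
The paper is an extended abstract and gives no proof of Theorem~\ref{basismain} (full proofs are deferred to~\cite{M3}), so a direct comparison is not possible. Evaluating your outline on its own terms: the high-level structure is reasonable. Step (1), the lattice-point count via unimodularity of $\xi$ and Proposition~\ref{intpoints}, is correct and complete. Step (2), identifying $V_\lambda^*$ with the $\grad$-degree-$\la$ piece of $S/I$ and degenerating to the toric semigroup ring, is the right framework. However, you have correctly flagged the problem yourself: step (3) is where the entire theorem lives, and you do not carry it out. Saying it ``is likely handled by induction on the dominance order combined with the straightening/commutation relations'' is a description of what a proof might look like, not a proof. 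In particular the claim that the leading term of $f^x v_\la$ (under the order pulled back from $\ll$) is exactly $x$ for every lattice point $x\in\Pi_O(\la)$ is precisely the kind of triangularity statement that, in the FFLV case, required substantial combinatorial input (the Minkowski-sum/chain structure of the polytope, compatibility of $\ll$ with the PBW straightening relations). For general $O$ the relevant order $\ll$ depends on the pipe-dream twist $w^O$, and establishing the triangularity means verifying that the straightening relations among the $f_{i,j}$ only produce $\ll$-smaller terms; nothing in the proposal addresses why this holds.

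There is also a subtlety you glide past: the degeneration directly produces a lattice-point-indexed basis of $V_\la^*$ (the homogeneous component of the Plücker algebra), while the vectors $f^x v_\la$ live in $V_\la$. Bridging the two requires exhibiting a pairing between the degenerate dual basis and the PBW monomials that is upper triangular with nonzero diagonal in a common order --- this is a real step, not a formal dualization, and is where the inequality $w^J(i)\ge i$ and the explicit construction of $\xi$ enter the argument. Without this the counting argument alone cannot conclude. So the proposal is a plausible roadmap with an honest acknowledgment of its gap, but the gap is genuine and constitutes the entire technical content of the theorem.
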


When $O=A$ the transformation $\xi$ is almost the identity: one has $\xi(x)_{i,j}=x_{i,j}$ for all $i<j$ so that $f^{\xi(x)}=f^x$. Since $O_A(\la)$ is the FFLV polytope, one sees that the obtained basis is the FFLV basis of~\cite{FFL1}. For $O=P$ the corresponding basis is also known, see, for instance,~\cite{R,M1}. Since in this case the tuples $(w^J(1),\dots,w^J(k))$ are increasing, the definition of $\Pi_P(\la)$ is particularly simple. The observation that such a polytope $\Pi_P(\la)$ is unimodularly equivalent to the GT polytope $\cQ_O(\la)$ is due to~\cite{KM}.


\section{Type C}

\subsection{Type C poset polytopes}

For $n\ge 2$ consider the totally ordered set $(N,\lessdot)=\{1\lessdot\dots\lessdot n\lessdot -n\lessdot\dots\lessdot -1\}$. 

\begin{definition}
The \textit{type C GT poset} $(P,\prec)$ consist of pairs of integers $(i,j)$ such that $i\in[1,n]$ and $j\in[i,n]\cup[-n,-i]$. The order relation is given by $(i_1,j_1)\preceq(i_2,j_2)$ if and only if $i_1\le i_2$ and $j_1\ledot j_2$.
\end{definition}

$(P,<)$ has length $2n$, below is its Hasse diagram for $n=2$.
\begin{equation}
\begin{tikzcd}[row sep=tiny,column sep=0]\label{hasseC}
(1,1)\arrow[rd]&&(2,2)\arrow[rd]\\
&(1,2)\arrow[rd]\arrow[ru]&&(2,-2)\\
&&(1,-2)\arrow[ru]\arrow[rd]\\
&&&(1,-1)
\end{tikzcd}
\end{equation}

We use notation similar to type A. Let $A\subset P$ be the set of all $(i,i)$. Let $\cJ$ denote the set of order ideals in $(P,\prec)$. For $k\in[1,n]$ let $\cJ_k$ consist of $J$ such that $|J\cap A|=k$. We also consider the lattice $\bZ^n$ with $\om_k$ denoting the $k$th basis vector. The definition of MCOPs is almost identical.
\begin{definition}\label{mcopdefC}
Consider a subset $O\subset P$ such that $A\subset O$. For $J\in\cJ$ consider the set \[M_O(J)=(J\cap O)\cup\max\nolimits_{\prec}(J).\] Let $x_O(J)\in\bR^P$ denote the indicator vector $\mathbf 1_{M_O(J)}$. The MCOP $\cQ_O(\om_k)$ is the convex hull of $\{x_O(J)\}_{J\in\cJ_k}$. For $\la=(a_1,\dots,a_n)\in\bZ_{\ge0}^n$ the MCOP $\cQ_O(\la)$ is the Minkowski sum \[a_1\cQ_O(\om_1)+\dots+a_n\cQ_O(\om_n)\subset\bR^P.\]
\end{definition}

We identify $\bZ^n$ with the lattice of integral $\mathfrak{sp}_{2n}$-weights with $\om_k$ being the $k$th fundamental weight. Then for an integral dominant weight $\la\in\bZ_{\ge0}^n$ one sees that $\cQ_P(\la)$ is the type C Gelfand-Tsetlin polytope defined in~\cite{BZ} while $\cQ_A(\la)$ is the type C FFLV polytope defined in~\cite{FFL2}. Both of these polytopes are known to parametrize bases in $V_\la$, the irreducible $\mathfrak{sp}_{2n}(\bC)$-representation with highest weight $\la$. This again provides
\begin{proposition}\label{intpointsC}
For any $O$ and $\la$ we have $|\cQ_O(\la)\cap\bZ^P|=\dim V_\la$.
\end{proposition}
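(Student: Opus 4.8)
The plan is to reduce the statement to two ingredients. The first is classical and already recorded above: $\cQ_P(\la)$ is the type C Gelfand--Tsetlin polytope of~\cite{BZ} and $\cQ_A(\la)$ is the type C FFLV polytope of~\cite{FFL2}, and each of these carries a combinatorial basis of $V_\la$ indexed by its lattice points, so that $|\cQ_P(\la)\cap\bZ^P|=|\cQ_A(\la)\cap\bZ^P|=\dim V_\la$. The second ingredient is that the lattice-point count of $\cQ_O(\la)$ does not depend on $O$; granting this, the proposition follows by specializing to $O=P$ (or $O=A$).

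To establish the $O$-independence I would argue exactly as in type A. First one identifies $\cQ_O(\la)$ --- which Definition~\ref{mcopdefC} presents as the Minkowski sum $a_1\cQ_O(\om_1)+\dots+a_n\cQ_O(\om_n)$ of $0/1$-polytopes --- with a marked chain-order polytope in the sense of~\cite{FF,FFLP} attached to (a mild modification of) the type C GT poset $(P,\prec)$: the marked subset is $A$, the mark at $(i,i)$ being $a_i+\dots+a_n$, and $O\bs A$ and $P\bs O$ play the roles of the order part and the chain part. Concretely one must write down the facet inequalities of the latter polytope, check that each vertex $x_O(J)$, $J\in\cJ_k$, satisfies them, and check that the two polytopes actually coincide --- equivalently, that the Minkowski sum is normal and has exactly the lattice points predicted by the inequalities rather than only some of them. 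This is the type C counterpart of~\cite[Subsection 3.5]{FM2} and is carried out in~\cite{MM}. With this identification in hand the Ehrhart-equivalence of marked chain-order polytopes~\cite[Corollary 2.5]{FFLP}, valid for an arbitrary underlying marked poset, shows that the polytopes $\cQ_O(\la)$ with $\la$ fixed are pairwise Ehrhart-equivalent, hence in particular equinumerous in lattice points. One can also bypass~\cite{FFLP} and read off the $O$-independence directly from a transfer-map argument: moving a single element of $P\bs A$ from the chain part to the order part is realized by a continuous, piecewise-linear, lattice-preserving bijection of the two polytopes, and one chains such moves along a path of subsets from $A$ to $P$.

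The step I expect to be the main obstacle is this identification: showing that the Minkowski-sum polytope $\cQ_O(\la)$ is normal and has exactly the expected set of lattice points. This is genuinely more delicate than in type A. In type A the marked subset $A=\{(1,1),\dots,(n,n)\}$ already contains the unique minimal element $(1,1)$ and the unique maximal element $(n,n)$ of $P$, so the GT poset is literally a marked poset; in type C the maximal elements $(1,-1),\dots,(n,-n)$ lie outside $A$, so the marked-poset structure must be arranged with care (for instance by adjoining a $0$-marked element above each maximal element of $P$), and the additional "negative" half $[-n,-i]$ of each row of $(P,\prec)$ makes the combinatorics of which order ideals $J\in\cJ_k$ yield which indicator vectors $x_O(J)$, and the attendant bookkeeping of the defining inequalities, considerably heavier.
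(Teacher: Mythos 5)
Your proposal matches the paper's (very brief) justification: both reduce to the known lattice-point counts for $\cQ_P(\la)$ (the type C GT polytope of \cite{BZ}) and $\cQ_A(\la)$ (the type C FFLV polytope of \cite{FFL2}), together with the $O$-independence of the count supplied by the Ehrhart-equivalence of marked chain-order polytopes from \cite[Cor.\ 2.5]{FFLP}. Your observation that the type C poset requires care when viewed as a marked poset --- its maximal elements $(i,-i)$ lie outside the marked set $A$, unlike in type A where $(n,n)\in A$ --- is a genuine subtlety that the paper's text does not mention; you correctly note it is addressed in~\cite{MM}.
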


We also have multiprojective embeddings for toric varieties. Consider the product \[\bP_\cJ=\bP(\bC^{\cJ_1})\times\dots\times\bP(\bC^{\cJ_n})\] and its multihomogeneous coordinate ring $\bC[\cJ]=\bC[X_J]_{J\in\cJ_1\cup\dots\cup\cJ_n}$. Let $I_O$ denote the kernel of the homomorphism $\varphi_O:X_J\mapsto z^{x_O(J)}$ from $\bC[\cJ]$ to $\bC[P]=\bC[z_{i,j}]_{(i,j)\in P}$.
\begin{proposition}\label{hibiprojC}
For regular $\la$ the toric variety of $\cQ_O(\la)$ is isomorphic to the zero set of $I_O$~in~$\bP_\cJ$.
\end{proposition}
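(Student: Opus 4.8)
The plan is to mimic the standard dictionary between normal lattice polytopes and their projective toric varieties, now in the multiprojective setting dictated by the Minkowski-sum structure of $\cQ_O(\la)$. First I would observe that it suffices to treat a \emph{regular} $\la$, and that by the usual argument the normal fan of a Minkowski sum $a_1\cQ_O(\om_1)+\dots+a_n\cQ_O(\om_n)$ with all $a_i>0$ is the common refinement of the individual normal fans; hence, up to isomorphism, the toric variety depends only on the support of $\la$ and we may as well take $\la=\om_1+\dots+\om_n$. So the target is to identify the toric variety of $\cQ_O(\om_1)+\dots+\cQ_O(\om_n)$ with the zero set of $I_O$ in $\bP_\cJ=\bP(\bC^{\cJ_1})\times\dots\times\bP(\bC^{\cJ_n})$.

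Next I would invoke normality of the summands $\cQ_O(\om_k)$ — which in type C should follow by the same reasoning as in type A, e.g.\ because these are unimodularly equivalent (via the analog of the map $\xi$) to $0/1$-polytopes, or more robustly from the general fact that MCOPs are integrally closed (cf.\ \cite{FF,FFLP}). Given normality of each summand and the fact that the vertices of $\cQ_O(\om_k)$ are exactly the points $x_O(J)$, $J\in\cJ_k$, the homogeneous coordinate ring of the toric variety of the Minkowski sum is the multigraded subalgebra of $\bC[P][t_1,\dots,t_n]$ generated by the monomials $z^{x_O(J)}t_k$ for $J\in\cJ_k$. This subalgebra is exactly the image of the map $X_J\mapsto z^{x_O(J)}$ (recording the $\cJ_k$-grading via $t_k$), so its defining ideal inside $\bC[\cJ]$ is the kernel $I_O$ of $\varphi_O$. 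That $I_O$ is generated in multidegrees where the total degree is at most... — more precisely, that the embedding it defines really is the toric variety and not some non-normal affine patch — is again guaranteed by normality of the summands together with the standard result (see \cite{CL} or the toric-geometry references therein) that for normal polytopes $\cP_1,\dots,\cP_n$ the multiprojective variety cut out by the toric ideal of the map $X_J\mapsto z^{\text{(vertex)}}$ in $\prod\bP(\bC^{\text{vertices}(\cP_k)})$ is the toric variety of $\cP_1+\dots+\cP_n$. Combining, the zero set of $I_O$ in $\bP_\cJ$ is the toric variety of $\cQ_O(\om_1+\dots+\om_n)$, hence of $\cQ_O(\la)$ for every regular $\la$.

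The main obstacle I anticipate is verifying normality (equivalently integral closedness) of $\cQ_O(\om_k)$ and, slightly more delicately, of the Minkowski sums $\cQ_O(\la)$ in type C: the clean argument in type A used that each $\cQ_O(\om_k)$ is literally a $0/1$-polytope with $\binom{n}{k}$ lattice points, and one wants the type C analog — presumably each $\cQ_O(\om_k)$ is a $0/1$-polytope whose lattice points biject with $\cJ_k$, with $|\cJ_k|=\dim V_{\om_k}$ — together with Ehrhart-equivalence of all MCOPs with fixed $\la$ (the type C counterpart of \cite[Corollary 2.5]{FFLP}, cited here as the basis for Proposition~\ref{intpointsC}). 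Once normality is in hand the rest is the formal toric dictionary. I would also double-check the bookkeeping for the multigrading: the grading on $\bC[\cJ]$ by $(\bZ_{\ge0})^n$ with $\deg X_J=\om_k$ for $J\in\cJ_k$ must match the grading on $\bC[z_{i,j}]$-module structure induced by the Minkowski decomposition, so that ``$\MultiProj$'' of $\bC[\cJ]/I_O$ is genuinely the toric variety of the sum and not of some other polytope; this is routine but worth stating carefully, exactly as in the paragraph preceding Proposition~\ref{hibiproj} in the type A discussion.
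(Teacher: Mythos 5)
Your argument follows the same route the paper takes: reduce to $\la=\om_1+\dots+\om_n$ via the observation that the normal fan of the Minkowski sum depends only on the support of $\la$, invoke normality of each $\cQ_O(\om_k)$ (a $0/1$-polytope whose lattice points are indexed by $\cJ_k$), and then apply the standard multiprojective toric dictionary for Minkowski sums — exactly the reasoning sketched in the paragraph preceding Proposition~\ref{hibiproj} and carried over to type C. You also correctly flag the one genuinely nontrivial input, namely that the type C MCOPs are integrally closed with the Minkowski-sum property $\cQ_O(\la)+\cQ_O(\mu)=\cQ_O(\la+\mu)$, which is what the cited references \cite{FF,FFLP} supply.
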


\subsection{Type C pipe dreams}

Let $S_N$ denote the group of all permutations of the set $N$. For $(i,j)\in P$ let $s_{i,j}\in S_N$ denote the transposition which exchanges $i$ and $j$ and fixes all other elements ($s_{i,i}=\id$). 

\begin{definition}
For $M\subset P$ let $w_M\in S_N$ be the product of all $s_{i,j}$ with $(i,j)\in M$ ordered first by $i$ increasing from left to right and then by $j$ increasing with respect to $\lessdot$ from left to right.
\end{definition}

In this case the pipe dream consists of $2n$ pipes enumerated by $N$. In terms of the visualization in~\eqref{hasseC} the $i$th pipe with $i\in[1,n]$ enters the element $(i,-i)$ from the \textbf{bottom}-right and turns at elements of $M\cup A$ while the $i$th pipe with $i\in[-n,-1]$ enters the element $(i,-i)$ from the \textbf{top}-right and then also turns at elements of $M\cup A$. 

\begin{center}
\begin{tikzcd}[row sep=tiny,column sep=0]
&(1,1)\ar[green]{ld}\ar[green]{ld}&&(2,2)\ar[magenta]{ld}&&\color{lightgray}{(3,3)}\ar[magenta]{ld}&&\phantom{(1,1)}\ar[magenta]{ld}\\
\phantom{(1,1)}&&\color{lightgray}(1,2)\ar[green]{lu}\ar[magenta]{ld}&&(2,3)\ar[green]{ld}\ar[magenta]{lu}&&{(3,-3)}\ar[blue]{ld}\ar[magenta]{lu}&&\phantom{(1,1)}\\
&\phantom{(1,1)}&&(1,3)\ar[green]{lu}\ar[orange]{ld}&&\color{lightgray}{(2,-3)}\ar[green]{lu}\ar[blue]{ld}&&\phantom{(1,1)}\ar[blue]{lu}\ar[orange]{ld}\\
&&\phantom{(1,1)}&&\color{lightgray}(1,-3)\ar[blue]{ld}\ar[orange]{lu}&&\color{lightgray}(2,-2)\ar[green]{lu}\ar[orange]{ld}&&\phantom{(1,1)}\\
&&&\phantom{(1,1)}&&(1,-2)\ar[red]{ld}\ar[orange]{lu}&&\phantom{(1,1)}\ar[green]{lu}\ar[cyan]{ld}\\
&&&&\phantom{(1,1)}&&\color{lightgray}(1,-1)\ar[red]{lu}\ar[cyan]{ld}\\
&&&&&\phantom{(1,1)}&&\phantom{(1,1)}\ar[red]{lu}
\end{tikzcd}
\end{center}

The pipe dream for the set $M=\{(1,1),(1,3),(1,-2),(2,2),(2,3),(3,-3)\}$ is shown above with each pipe in its own colour. One obtains \[w_M(1,2,3,-3,-2,-1)=(-2,1,-3,2,3,-1)\] which agrees with $w_M=s_{1,1}s_{1,3}s_{1,-2}s_{2,2}s_{2,3}s_{3,-3}$.

In fact, pipe dreams for type $\mathrm{C}_n$ can be viewed as a special case of pipe dreams for type $\mathrm{A}_{2n-1}$, i.e.\ for $\fsl_{2n}$. Indeed, one may identify the type $\mathrm{C}_n$ GT poset with the ``left half'' of the type $\mathrm{A}_{2n-1}$ GT poset. Then the $2n$ pipes in the type C pipe dream of $M$ will just be end parts of the $2n$ pipes in the type A pipe dream of the same set $M$.


We again introduce a ``twisted'' version of the correspondence determined by the choice of $O\subset P$: set $w^O_M=w_O^{-1}w_M$.

\subsection{The intermediate Schubert degeneration}

We now construct a degeneration $\widetilde F$ of the symplectic flag variety which will be used as an intermediate step: toric degenerations will be obtained as further degenerations of $\widetilde F$.

\begin{definition}
A tuple $(i_1,\dots,i_k)$ of elements of $N$ is \textit{admissible} if for every $l\in[1,n]$ the number of elements with $|i_j|\le l$ does not exceed $l$. Let $\Theta$ denote the set of all admissible tuples of the form $(i_1\lessdot\dots\lessdot i_k)$ and $\Theta_k\subset\Theta$ denote the subset of $k$-tuples.
\end{definition}

Consider the space $\bC^N\simeq\bC^{2n}$ with basis $\{e_i\}_{i\in N}$. One has a standard embedding $V_{\om_k}\subset\wedge^k\bC^N$. Let $\{X_{i_1,\dots,i_k}\}_{i_1\lessdot\dots\lessdot i_k}$ be the basis in $(\wedge^k\bC^N)^*$ dual to $\{e_{i_1}\wedge\dots\wedge e_{i_k}\}_{i_1\lessdot\dots\lessdot i_k}$. It is known (\cite{DC}) that the set $\{X_{i_1,\dots,i_k}\}_{(i_1,\dots,i_k)\in\Theta_k}$ projects to a basis in $V_{\om_k}^*$.
This allows us to identify $V_{\om_k}^*$ with $\bC^{\Theta_k}$. The multihomogeneous coordinate ring of  \[\bP=\bP(V_{\om_1})\times\dots\times\bP(V_{\om_n})\] is then identified with $\bC[\Theta]=\bC[X_{i_1,\dots,i_k}]_{(i_1,\dots,i_k)\in\Theta}$. The Pl\"ucker embedding of the complete symplectic flag variety $F\hookrightarrow\bP$ is defined by the symplectic Pl\"ucker ideal $I\subset\bC[\Theta]$.

Next, consider the variety $F_\mathrm{A}$ of type A partial flags in $\bC^N$ of signature $(1,\dots,n)$. It is embedded into \[\bP_\mathrm{A}=\bP(\wedge^1\bC^N)\times\dots\times\bP(\wedge^n\bC^N)\] where it is cut out by the Pl\"ucker ideal $I_\mathrm{A}$ in the multihomogeneous coordinate ring $S=\bC[X_{i_1,\dots,i_k}]_{k\in[1,n],\{i_1\lessdot\dots\lessdot i_k\}\subset N}$. Consider the surjection $\pi:S\to\bC[\Theta]$ mapping all $X_{i_1,\dots,i_k}\notin\bC[\Theta]$ to 0 and fixing $\bC[\Theta]$. Set $\widetilde I=\pi(I_\mathrm{A})$.
\begin{thm}\label{intermediate}
There exists a monomial order $\widetilde<$ on $\bC[\Theta]$ such that $\initial_{\widetilde <} I=\widetilde I$.
\end{thm}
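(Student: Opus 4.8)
The plan is to identify $\widetilde I = \pi(I_\mathrm{A})$ as the initial ideal of the symplectic Plücker ideal $I$ with respect to a suitable monomial order on $\bC[\Theta]$, by comparing generators. First I would recall the explicit generators of the type A Plücker ideal $I_\mathrm{A}$ (the incidence/Grassmann–Plücker relations for the flag variety $F_\mathrm{A}$), and analyze what happens when $\pi$ is applied: a relation is either killed entirely (if every monomial contains a variable outside $\bC[\Theta]$) or it maps to a genuine relation among the admissible Plücker coordinates. So $\widetilde I$ is generated by the images $\pi(r)$ of those Grassmann–Plücker relations $r$ that survive, together with the variables $X_{i_1,\dots,i_k}$ with $(i_1,\dots,i_k)\notin\Theta_k$ (these lie in $\widetilde I$ since $\pi$ sends them to $0$ but they need not be in $I$ as written — more precisely one works inside $\bC[\Theta]$ from the start). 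On the other side, the symplectic Plücker ideal $I$ is generated by the type A Plücker relations among admissible coordinates plus the extra "symplectic" linear-type relations coming from the $\mathfrak{sp}_{2n}$-invariant form; a clean reference for such generators is de Concini's work cited as \cite{DC}.

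The key step is to choose the monomial order $\widetilde<$ on $\bC[\Theta]$ so that, for each generator of $I$, its initial term is precisely the corresponding generator of $\widetilde I$ (and so that these initial terms generate the full initial ideal, i.e.\ the chosen generators of $I$ form a sagbi/Gröbner basis for this order). Concretely I would build $\widetilde<$ from a lexicographic order on the symbols $X_{i_1,\dots,i_k}$ induced by an ordering of the index tuples that reflects the passage from $\bC^{2n}$-coordinates to symplectic ones: admissible tuples should be "small" and the non-admissible ones "large", so that a symplectic straightening relation expressing a non-admissible coordinate in terms of admissible ones has the non-admissible monomial as its leading term. Since $\pi$ kills exactly the non-admissible variables, $\initial_{\widetilde<}$ of such a relation, restricted to $\bC[\Theta]$, is $0$ — matching the fact that these relations are in the kernel of $\pi$. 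For the surviving Grassmann–Plücker relations, $\pi$ acts as the identity and one wants $\initial_{\widetilde<}$ to be unaffected, which holds automatically once the order restricted to admissible coordinates is chosen compatibly.

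A cleaner route, which I would actually pursue, exploits the inclusion remarked in the excerpt: type C pipe dreams are restrictions of type $\mathrm{A}_{2n-1}$ pipe dreams, and $F$ sits inside the type A flag variety $F_\mathrm{A}$ of signature $(1,\dots,n)$ in $\bC^{2n}$. There is a standard "GT-type" monomial order $\ll$ on $\bC[P]$ (extending the one used in the proof of Theorem~\ref{degenmain}) for which the type A Plücker coordinates $D_{i_1,\dots,i_k}$ form a sagbi basis, so $\initial_\ll I_\mathrm{A}$ is the toric ideal of the (type A) GT polytope. Restricting this degeneration to the symplectic flag variety and tracking how $\pi$ interacts with $\initial_\ll$ gives $\widetilde I$ as $\pi(\initial_\ll I_\mathrm{A})$, and one checks $\pi(\initial_\ll I_\mathrm{A}) = \initial_{\widetilde<}\pi(I_\mathrm{A}) = \initial_{\widetilde<} I$ where $\widetilde<$ is the pullback of $\ll$ along the symplectic Plücker map. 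This commutation of $\pi$ with taking initial parts is the crux: it requires that no cancellation of leading terms occurs under $\pi$, which I expect to follow from the "sagbi" property (leading terms of the generators are distinct monomials in $\bC[P]$, hence survive $\pi$ unless they literally involve a forgotten variable).

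The main obstacle I anticipate is precisely verifying that $\pi$ and $\initial_{\widetilde<}$ commute — equivalently, that the chosen generating set of $I$ is simultaneously a Gröbner basis for $\widetilde<$ and that applying $\pi$ does not create lower-order terms becoming leading. This is where the structure of the symplectic straightening relations (the non-admissible-tuple expansions, controlled by the symplectic form pairing $i$ with $-i$) must be used in an essential way: one needs that each such relation's non-leading monomials are themselves products of admissible coordinates of strictly smaller $\widetilde<$-weight, so that after $\pi$ the relation either vanishes (leading term was non-admissible) or is unchanged. Making this combinatorics precise — likely via the type C pipe-dream description of which tuples are admissible and an induction on the length of the poset $(P,\prec)$ — is the technical heart of the argument; everything else is bookkeeping with Plücker relations and monomial orders.
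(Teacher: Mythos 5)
Your Route~2 has a decisive error: the chain $\pi(\initial_\ll I_\mathrm{A}) = \initial_{\widetilde<}\pi(I_\mathrm{A}) = \initial_{\widetilde<} I$ uses the identity $\pi(I_\mathrm{A}) = I$, but the paper \emph{defines} $\widetilde I = \pi(I_\mathrm{A})$, and $\widetilde I$ and $I$ are genuinely different ideals of $\bC[\Theta]$ (one cuts out the Schubert variety $\widetilde F$, the other the symplectic flag variety $F$). With the correct identity your chain computes an initial ideal of $\widetilde I$, which says nothing about $I$. The entire difficulty of Theorem~\ref{intermediate} is precisely to relate the \emph{symplectic} Pl\"ucker ideal $I$ to the \emph{projected type A} ideal $\widetilde I$, and degenerating $I_\mathrm{A}$ inside $S$ and applying $\pi$ never touches $I$. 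The remark that $\widetilde F$ embeds in $F_\mathrm{A}$ as a Schubert variety is a \emph{consequence} of the structure of $\widetilde I$, not a mechanism for degenerating $I$; ``restricting the GT degeneration of $F_\mathrm{A}$ to $F$'' is not an operation on ideals, and $F$ is not a Schubert variety in $F_\mathrm{A}$.

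Route~1 is closer in spirit but is not a proof. You organize $I$ as ``Pl\"ucker relations among admissible coordinates plus symplectic straightening relations'' and want a monomial order on $\bC[\Theta]$ making non-admissible monomials large; but the non-admissible variables do not exist in $\bC[\Theta]$, so there are no leading terms for them to occupy, and the straightening relations (once rewritten purely in admissible variables) no longer have the triangular shape you are invoking. More to the point, you never exhibit either ideal as the kernel of a concrete map, which is the crux. Proposition~\ref{intermediatekernel} tells you $\widetilde I = \ker(X_{i_1,\dots,i_k}\mapsto D_{i_1,\dots,i_k})$ where $D$ are minors of the $n\times 2n$ upper-triangular matrix $Z$ over $\bC[P]$, and the paper explicitly flags that the point of passing to $\widetilde I$ is ``to use the technique of sagbi degenerations.'' The expected argument therefore runs by writing $I$ itself as the kernel of a map $\bC[\Theta]\to\bC[P]$ sending $X_{i_1,\dots,i_k}$ to the corresponding minor $D'_{i_1,\dots,i_k}$ of a \emph{symplectic} matrix $Z'$ parametrized by the same $z_{i,j}$, $(i,j)\in P$ (this is where the $n^2$ free coordinates of $P$ match the dimension of the unipotent radical of the symplectic Borel), then choosing a monomial or weight order $\ll$ on $\bC[P]$ under which $\initial_\ll D' = D$, and finally pulling $\ll$ back across the map. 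This is consistent with the later remark that $<_0$ ``arises from a total monomial order $\ll$ on $\bC[P]$.'' Your proposal is missing this kernel description of $I$ and the degeneration $D'\leadsto D$ inside $\bC[P]$; everything you labeled ``the technical heart'' hinges on exactly that, and neither of your routes supplies it.
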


This means that $\widetilde F$, the zero set of $\widetilde I$ in $\bP$, is a flat degeneration of $F$. Also, importantly to us, every initial ideal of $\widetilde I$ is an initial ideal of $I$. The advantage of working with $\widetilde I$ instead of degenerating $I$ directly is that the former is more simply expressed as a homomorphism kernel, allowing us to use the technique of sagbi degenerations. Consider $Z$, the $n\times 2n$ matrix with rows indexed by $[1,n]$ and columns indexed by $N$ such that $Z_{i,j}=z_{i,j}$ if $(i,j)\in P$ and $Z_{i,j}=0$ otherwise. Denote by $D_{i_1,\dots,i_k}$ the minor of $Z$ spanned by rows $1,\dots,k$ and columns $i_1,\dots,i_k$.
\begin{proposition}\label{intermediatekernel}
$\widetilde I$ is the kernel of the homomorphism $X_{i_1,\dots,i_k}\mapsto D_{i_1,\dots,i_k}$ from $\bC[\Theta]$ to $\bC[P]$.    
\end{proposition}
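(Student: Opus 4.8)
The plan is to reduce this statement to the corresponding fact for the type A Plücker ideal $I_\mathrm{A}$ and then transport it along $\pi$. First I would recall (or reprove, it is classical) that the type A Plücker ideal $I_\mathrm{A}\subset S$ is exactly the kernel of the homomorphism $\varphi_\mathrm{A}\colon S\to\bC[z_{i,j}]_{i\in[1,n],j\in N}$ sending $X_{i_1,\dots,i_k}$ to the minor of the generic $n\times 2n$ matrix $(z_{i,j})$ on rows $1,\dots,k$ and columns $i_1\lessdot\dots\lessdot i_k$; this is the standard ``first fundamental theorem'' presentation of the coordinate ring of the partial flag variety $F_\mathrm{A}$. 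Write $\bar Z$ for the generic $n\times 2n$ matrix and $Z$ for its specialization obtained by setting $z_{i,j}=0$ whenever $(i,j)\notin P$, i.e.\ whenever $j\in[-n,-1]$ with $|j|<i$; and write $\bar D_{i_1,\dots,i_k}$, $D_{i_1,\dots,i_k}$ for the corresponding minors. Let $\sigma\colon\bC[z_{i,j}]_{i\in[1,n],j\in N}\to\bC[P]=\bC[z_{i,j}]_{(i,j)\in P}$ be the specialization ring homomorphism, so $\sigma(\bar D_{i_1,\dots,i_k})=D_{i_1,\dots,i_k}$.

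The key diagram is
\begin{equation}
\begin{tikzcd}
S\ar[r,"\varphi_\mathrm{A}"]\ar[d,"\pi"'] & \bC[z_{i,j}]_{i\in[1,n],j\in N}\ar[d,"\sigma"]\\
\bC[\Theta]\ar[r,"\varphi"'] & \bC[P]
\end{tikzcd}
\end{equation}
where $\varphi$ is the map $X_{i_1,\dots,i_k}\mapsto D_{i_1,\dots,i_k}$ from the statement. This square commutes on generators: for $(i_1,\dots,i_k)\in\Theta$ one has $\pi(X_{i_1,\dots,i_k})=X_{i_1,\dots,i_k}$ and $\sigma(\varphi_\mathrm{A}(X_{i_1,\dots,i_k}))=\sigma(\bar D_{i_1,\dots,i_k})=D_{i_1,\dots,i_k}=\varphi(X_{i_1,\dots,i_k})$; for the remaining generators $X_{i_1,\dots,i_k}\notin\bC[\Theta]$ the tuple $(i_1,\dots,i_k)$ is \emph{not} admissible, so there is some $l\in[1,n]$ with more than $l$ of the columns $i_j$ having $|i_j|\le l$. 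In the specialized matrix $Z$ those columns, restricted to rows $1,\dots,l$, are the only rows that can be nonzero in them — wait, more precisely: a column $j$ of $Z$ with $|j|\le l$ has all entries in rows $>l$ equal to zero (if $j\in[1,n]$ then $Z_{i,j}=0$ for $i>j$, and $j\le l$; if $j\in[-n,-1]$ with $|j|\le l$ then $Z_{i,j}=0$ for $i>|j|$). Hence the submatrix of $Z$ on rows $1,\dots,k$ and these $>l$ columns has its nonzero entries confined to $l$ rows, forcing $D_{i_1,\dots,i_k}=0$. Therefore $\varphi(X_{i_1,\dots,i_k})=0=\sigma(\varphi_\mathrm{A}(X_{i_1,\dots,i_k}))$, so the square commutes.

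Now $\ker\varphi\supseteq\pi(\ker\varphi_\mathrm{A})=\pi(I_\mathrm{A})=\widetilde I$ follows immediately from commutativity, since $\pi$ is surjective: any $p\in I_\mathrm{A}$ satisfies $\varphi(\pi(p))=\sigma(\varphi_\mathrm{A}(p))=0$. For the reverse inclusion $\ker\varphi\subseteq\widetilde I$, take $q\in\bC[\Theta]$ with $\varphi(q)=0$; lift it to any $\tilde q\in S$ with $\pi(\tilde q)=q$ (e.g.\ regard $\bC[\Theta]$ as the subalgebra of $S$ generated by the $X_{i_1,\dots,i_k}$ with admissible index, using that $\pi$ is the identity there). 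Then $\sigma(\varphi_\mathrm{A}(\tilde q))=\varphi(\pi(\tilde q))=\varphi(q)=0$. The main obstacle is precisely this step: one must show that $\varphi_\mathrm{A}(\tilde q)\in\ker\sigma$ already forces $\tilde q\in I_\mathrm{A}+\ker\pi$, i.e.\ that $\ker\sigma\cap\varphi_\mathrm{A}(S)\subseteq\varphi_\mathrm{A}(\ker\pi)+\{0\}$ modulo $\varphi_\mathrm{A}(I_\mathrm{A})=0$ — equivalently that $\varphi_\mathrm{A}^{-1}(\ker\sigma)=I_\mathrm{A}+\ker\pi$. I would establish this by a dimension/flatness count: $\widetilde I$ is an initial ideal of $I$ by Theorem~\ref{intermediate}, so $\bC[\Theta]/\widetilde I$ and $\bC[\Theta]/\ker\varphi$ would both need to be compared — better, I would instead compute that the image subalgebra $\varphi(\bC[\Theta])\subseteq\bC[P]$ has Krull dimension equal to $\dim F + 1 = \binom{n+1}{2}+n+1$ (the affine cone), matching $\dim\bC[\Theta]/\widetilde I$, which together with the established surjection $\bC[\Theta]/\widetilde I\twoheadrightarrow\bC[\Theta]/\ker\varphi=\varphi(\bC[\Theta])$ of graded domains of the same dimension — using that $\widetilde F$ is irreducible as a flat degeneration of the irreducible $F$, hence $\widetilde I$ is prime — forces the surjection to be an isomorphism, giving $\ker\varphi=\widetilde I$. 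The dimension count for $\varphi(\bC[\Theta])$ is the computational heart: the generic minors $\bar D_{i_1,\dots,i_k}$ (admissible tuples) generate the symplectic Plücker algebra of dimension $\dim F+1$, and specializing via $\sigma$ cannot drop the dimension because the coordinate functions $z_{i,j}$, $(i,j)\in P$, are algebraically independent and each surviving minor specializes to a nonzero polynomial whose leading term (under the order $\widetilde<$ pulled back appropriately, or under the lexicographic order of the proof sketch of Theorem~\ref{degenmain}) is a squarefree monomial in the $z_{i,j}$; counting these leading monomials shows the specialized algebra still has full dimension.
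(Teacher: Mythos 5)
Your commutative-square setup and the verification that a non-admissible tuple gives $D_{i_1,\dots,i_k}=0$ (hence $\varphi\circ\pi=\sigma\circ\varphi_{\mathrm A}$, hence $\widetilde I=\pi(I_{\mathrm A})\subseteq\ker\varphi$) is correct and is the natural first half of the argument. The problem is in how you close the gap for the reverse inclusion. You want to conclude that the surjection $\bC[\Theta]/\widetilde I\twoheadrightarrow\varphi(\bC[\Theta])$ is an isomorphism from a dimension count between two \emph{domains}, but you justify primality of $\widetilde I$ by saying $\widetilde F$ ``is irreducible as a flat degeneration of the irreducible $F$.'' That inference is false in general: flat degenerations of irreducible varieties are routinely reducible (the flat family $xy=t$ already degenerates an irreducible conic to two lines), and initial degenerations of prime ideals need not be prime. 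The primality you need is genuinely a fact, but it must come from the Schubert-variety identification that the paper makes right after the proposition: $\pi^{-1}(\widetilde I)=I_{\mathrm A}+\ker\pi$ is the defining ideal of a Schubert variety in $F_{\mathrm A}$, and Schubert varieties are integral; this is an input from standard monomial theory, independent of the flat degeneration of Theorem~\ref{intermediate}. Substituting this for the flat-degeneration claim repairs the argument, and in fact once you have the Schubert picture there is a cleaner route than the dimension count: the Schubert cell $C_w$ is dense in $X_w$ and the matrix $Z$ parametrizes the affine multicone over $C_w$, so the restriction map from the multihomogeneous coordinate ring $S/(I_{\mathrm A}+\ker\pi)$ of $X_w$ into $\bC[P]$ is injective, and its composition with $S\twoheadrightarrow S/(I_{\mathrm A}+\ker\pi)$ is exactly $X_{\mathbf i}\mapsto D_{\mathbf i}$; taking kernels gives $\ker\varphi=\widetilde I$ directly.

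Two smaller points. First, your dimension bookkeeping is off: for the complete symplectic flag variety $\dim F=n^2$, not $\binom{n+1}{2}$, and the relevant affine multicone (over a subvariety of a product of $n$ projective spaces) has dimension $\dim F+n=n^2+n=|P|$, not $\dim F+1$. Second, citing Theorem~\ref{intermediate} inside this proof is risky: in the underlying paper the proof of Theorem~\ref{intermediate} very plausibly relies on the present proposition (the remark after the proposition suggests as much), so you should avoid leaning on it to prevent circularity — another reason to go through the Schubert-variety route, which depends only on the definition of $\widetilde I$ as $\pi(I_{\mathrm A})$.
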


A noteworthy property of $\widetilde F$ is that it is a type A Schubert variety in $F_\mathrm{A}$. Indeed, $\pi^{-1}(\widetilde I)$ cuts out $\widetilde F$ in $\bP_\mathrm{A}$ and $\pi^{-1}(\widetilde I)$ is generated by $I_\mathrm{A}$ and all $X_{i_1,\dots,i_k}\notin\bC[\Theta]$. Consider the alternative order $-1\lessdot'1\lessdot'\dots\lessdot'-n\lessdot n$ on $N$. One sees that $X_{i_1,\dots,i_k}\notin\bC[\Theta]$ if and only if $(i_1,\dots,i_k)$ has a reordering $(j_1\lessdot'\dots\lessdot'j_k)$ such that $j_l\lessdot' -l$ for some $l$. Now one sees that $\pi^{-1}(\widetilde I)$ is indeed the defining ideal of a Schubert variety in $F_\mathrm{A}$. Namely, the Schubert variety corresponding to the Borel subgroup in $SL(\bC^N)$ given by the ordering $\lessdot'$ and the torus-fixed point $y\in F_\mathrm{A}$ with all multihomogeneous coordinates zero except for $y_{-1,\dots,-k}$ with $k\in[1,n]$.

\subsection{Toric degenerations}

Fix $O\subset P$ containing $A$. We can now realize the toric variety of $\cQ_O(\la)$ as a degeneration of $F$ by identifying $I_O$ with an initial ideal of $\widetilde I$. This is again done via an isomorphism between $\bC[\cJ]$ and $\bC[\Theta]$. For $J\in\cJ$ denote $w^O_{M_O(J)}=w^J$.
\begin{lem}
For every $J\in\cJ_k$ and $i\in[1,k]$ one has $|w^J(i)|\ge i$. In particular, $(w^J(1),\dots,w^J(k))$ is admissible. 
\end{lem}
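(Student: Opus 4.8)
The plan is to pass to the order‑preserving identification $N\simeq[1,2n]$ (sending $1\lessdot\cdots\lessdot n\lessdot -n\lessdot\cdots\lessdot -1$ to $1<\cdots<2n$), to translate the claim into a two‑sided estimate, and to prove the two sides by different means. Write $p\colon N\to[1,2n]$ for this identification, so $p(j)=j$ and $p(-j)=2n+1-j$ for $j\in[1,n]$; the elementary observation is that for $i\in[1,n]$ one has $|x|\ge i$ if and only if $p(x)\in[i,2n+1-i]$. Thus the lemma is equivalent to the two inequalities $i\le p(w^J(i))\le 2n+1-i$ for $J\in\cJ_k$ and $i\in[1,k]$.

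I would first collect the structural facts. Since $(i,i)\preceq(i,j)$ for every $(i,j)\in P$, an order ideal containing some $(i,j)$ contains $(i,i)$; as $J\cap A=\{(1,1),\dots,(k,k)\}$, every element of $J$ has first coordinate $\le k$, and hence $M:=M_O(J)=(J\cap O)\cup\max\nolimits_\prec J$ is contained in $\{(i,j)\in P:i\le k\}$ and still contains $(1,1),\dots,(k,k)$. Next, under $p$ the poset $P$ becomes the ``left half'' $\{(i,j):i\le j,\ i+j\le 2n+1\}$ of the $\fsl_{2n}$ Gelfand--Tsetlin poset, which is a lower set of it. Therefore $J$ is also an order ideal of the $\fsl_{2n}$ poset with $k$ diagonal elements, and $M$ coincides with the type A set $M_{O'}(J)$ for the marking $O'=O\cup\{(n{+}1,n{+}1),\dots,(2n,2n)\}$ (the extra diagonal entries contribute nothing to $M$ and only trivial transpositions to $w_{O'}$), so that $w^J$, as a permutation of $[1,2n]$, equals the type A twisted permutation attached to $O'$ and $J$. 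The type A statement of Theorem~\ref{degenmain}, namely $w^J(i)\ge i$ for $i\in[1,k]$, then gives the lower inequality $p(w^J(i))\ge i$ for free.

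The upper inequality $p(w^J(i))\le 2n+1-i$ is the genuinely new ingredient, and I expect it to be the main obstacle. The tool I would use is the row decomposition $w_M=\rho_1\circ\cdots\circ\rho_k$ and $w_O=\rho^O_1\circ\cdots\circ\rho^O_n$, where $\rho_l$ (resp.\ $\rho^O_l$) is the cyclic permutation induced by the row‑$l$ transpositions of $M$ (resp.\ $O$): if $l=c_1\lessdot\cdots\lessdot c_r$ are the columns of row $l$ occurring in $M$, then $\rho_l$ is the cycle $l\mapsto c_r\mapsto c_{r-1}\mapsto\cdots\mapsto c_2\mapsto l$ and fixes everything else. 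The key feature is that the support of every $\rho_l$ and $\rho^O_l$ lies in $\{x:p(x)\in[l,2n+1-l]\}$, since $(l,j)\in P$ forces $j\in[l,n]\cup[-n,-l]$. Tracking the orbit of $i$ through $w_M$ (applied with $\rho_k$ first): the rotations $\rho_k,\dots,\rho_{i+1}$ fix $i$; then $\rho_i$ sends $i$, the smallest column of its row, to an element of $p$‑value in $[i,2n+1-i]$; and one checks by downward induction that afterwards no $\rho_l$ with $l<i$ can push the running value's $p$‑value below $l+1$, so such a $\rho_l$ never hits the running value from its ``bottom slot'' and can only replace it by its predecessor, keeping the $p$‑value $\le 2n+1-i$. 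Hence $p(w_M(i))\le 2n+1-i$. The delicate point is the remaining factor $w_O^{-1}=(\rho^O_n)^{-1}\circ\cdots\circ(\rho^O_1)^{-1}$: the inverse rotations move elements to their \emph{successors} and could, a priori, raise the $p$‑value past $2n+1-i$. Here one must exploit the precise shape of $M=M_O(J)$: since $(l,c)$ with $c$ just above $l$ satisfies $(l,c)\prec(l{+}1,l{+}1)\in J$ whenever $l<k$, such a ``low'' element can lie in $M$ only if it lies in $O$, and more generally, for $(l,v)\in O$ one has $(l,v)\in M$ if and only if $(l,v)\in J$; matching the columns of $O$ against those of $M_O(J)$ row by row while following the running value simultaneously through $w_M$ and through $w_O$ is, I expect, where the real work lies.

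Finally, admissibility of $(w^J(1),\dots,w^J(k))$ is immediate from $|w^J(i)|\ge i$: for any $l\in[1,n]$, if $|w^J(i)|\le l$ then $i\le l$, so at most $l$ of the indices $1,\dots,k$, hence at most $l$ of the entries $w^J(1),\dots,w^J(k)$, have absolute value $\le l$.
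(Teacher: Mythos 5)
Your reduction of the lower bound $p(w^J(i))\ge i$ to the type A case via the ``left half'' embedding of the type C GT poset into the $\fsl_{2n}$ GT poset is correct and clean: the image is indeed an order ideal of the larger poset, order ideals of the type C poset are exactly the order ideals of the $\fsl_{2n}$ poset contained in this half, and since the added diagonal boxes $(n{+}1,n{+}1),\dots,(2n,2n)$ contribute only identity transpositions, the type C $w^J$ coincides with the type A $w^J$ for the marking $O'$. Your derivation of admissibility from $|w^J(i)|\ge i$ at the end is also correct.

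The problem is that you have not actually proved the upper bound $p(w^J(i))\le 2n+1-i$, and you say so yourself. What you do establish by the row-cycle induction is that $p(w_M(i))\le 2n+1-i$ for the untwisted permutation $w_M$; but this inequality is essentially a formal property of any row decomposition (the induction you run uses only that columns in row $l$ have $p$-value in $[l,2n+1-l]$, not the defining shape $M=M_O(J)$), so it cannot by itself carry the content of the lemma, which must use the relationship between $M$ and $O$. The statement that actually needs proving is about $w^J=w_O^{-1}w_M$, and the inverse rotations $(\rho_l^O)^{-1}$ can raise the $p$-value; you identify this as ``where the real work lies'' and then stop. At that point the argument is a plan, not a proof: no claim is isolated, no induction hypothesis is stated for the simultaneous tracking through $w_M$ and $w_O$, and the observation that an element of $O\cap(\text{row }l)$ lies in $M$ iff it lies in $J$ is left as a hint rather than turned into an estimate. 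Until that step is carried out, the upper bound, and hence the lemma, is not established. (For what it is worth, the paper itself does not include a proof of this lemma in the abstract and defers to the full paper, so I cannot say whether your intended route matches theirs; but as written, your proof of the upper half of the inequality is incomplete.)
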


The lemma lets us define a homomorphism $\psi:\bC[\cJ]\to\bC[\Theta]$, for $J\in\cJ_k$ we set\[\psi(X_J)=X_{w^J(1),\dots,w^J(k)}.\] 
\begin{thm}\label{degenmainC}
The map $\psi$ is an isomorphism and for a certain (explicitly defined) monomial order $<$ on $S$ one has $\psi(I_O)=\initial_< \widetilde I$. In particular, for regular $\la$ the toric variety of $\cQ_O(\la)$ is a flat degeneration of $\widetilde F$ and, subsequently, of $F$.
\end{thm}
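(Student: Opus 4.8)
The plan is to mirror the type A argument from Theorem~\ref{degenmain}, replacing the Plücker ideal $I$ with its degeneration $\widetilde I$, which by Theorem~\ref{intermediate} suffices since every initial ideal of $\widetilde I$ is an initial ideal of $I$. By Proposition~\ref{intermediatekernel} we have the concrete description $\widetilde I = \ker(X_{i_1,\dots,i_k}\mapsto D_{i_1,\dots,i_k})$, so we are in a setting where sagbi techniques apply directly. First I would establish the analog of the unimodular transformation $\xi$: I would exhibit $\xi\in SL(\bZ^P)$ such that $\xi(x_O(J)) = \mathbf 1_{\{(1,w^J(1)),\dots,(k,w^J(k))\}}$ for all $J\in\cJ_k$. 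The construction of $\xi$ should be essentially the same telescoping/pipe-dream bookkeeping as in type A, now carried out on the type C poset; here the preceding Lemma guaranteeing $|w^J(i)|\ge i$ is exactly what ensures $(1,w^J(1)),\dots,(k,w^J(k))$ are genuine elements of $P$ (recall $(i,j)\in P$ requires $j\in[i,n]\cup[-n,-i]$). Once $\xi$ is in hand, $I_O$ is identified with the kernel of $X_J\mapsto z_{1,w^J(1)}\cdots z_{k,w^J(k)}$.

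Next I would produce a lexicographic-type monomial order $\ll$ on $\bC[P]$, read off from the pipe dreams, with the property that
\[
\initial_\ll D_{w^J(1),\dots,w^J(k)} = z_{1,w^J(1)}\cdots z_{k,w^J(k)}
\]
for every $J\in\cJ_k$. The point of the pipe-dream combinatorics is that the leading term of the minor $D_{i_1,\dots,i_k}$ along the "staircase" path picks out precisely the diagonal product dictated by where the pipes turn. Because $\xi$ is a bijection, the monomials $z_{1,w^J(1)}\cdots z_{k,w^J(k)}$ are pairwise distinct as $J$ ranges over $\cJ_k$; since each is a squarefree monomial in distinct $z$-variables and they biject with $\Theta_k$ via $J\mapsto(w^J(1),\dots,w^J(k))$, this forces the map $\psi$ to be injective on monomials and hence an isomorphism $\bC[\cJ]\xrightarrow{\sim}\bC[\Theta]$. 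Then $\psi(I_O)$ is the kernel of $X_{i_1,\dots,i_k}\mapsto\initial_\ll D_{i_1,\dots,i_k}$, which is $\initial_\ll\widetilde I$ provided the determinants $D_{i_1,\dots,i_k}$ form a sagbi basis for their subalgebra under $\ll$ — i.e.\ their initial terms generate $\initial_\ll(\text{image})$. Granting the sagbi property, a standard lifting argument (Subalgebra Basis theorem / the mechanism used in the type A sketch) pulls $\ll$ back to a monomial order $<$ on $S$ — or more precisely on $\bC[\Theta]$ — with $\psi(I_O)=\initial_<\widetilde I$. The final sentence about toric degenerations of $\widetilde F$ and of $F$ is then immediate from the flatness of initial degenerations together with Theorem~\ref{intermediate}, using Proposition~\ref{hibiprojC} to recognize the special fiber as the toric variety of $\cQ_O(\la)$ for regular $\la$.

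The step I expect to be the main obstacle is verifying the sagbi property of the symplectic minors $D_{i_1,\dots,i_k}$ under $\ll$, together with the precise identification of $\initial_\ll D_{i_1,\dots,i_k}$ for \emph{all} admissible tuples (not just those of the form $(w^J(1),\dots,w^J(k))$), since the sagbi statement requires control of every minor's leading term and of all straightening relations among them. In type A this is classical (the Plücker relations form a Gröbner basis and the maximal minors form a sagbi basis for a suitable diagonal-type order), but here the matrix $Z$ is the truncated $n\times 2n$ matrix and the relevant subalgebra is the one generated by the $D_{i_1,\dots,i_k}$ for admissible tuples only — so one must check that the "extra" relations coming from $\pi$ (the vanishing $X_{i_1,\dots,i_k}\mapsto 0$ for non-admissible tuples, i.e.\ the Schubert-variety relations identified after Proposition~\ref{intermediatekernel}) are compatible with $\ll$. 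I would handle this by exploiting the observation already noted in the excerpt that type C pipe dreams are tail-segments of type $\mathrm A_{2n-1}$ pipe dreams: this lets one import the type A sagbi/Gröbner statement for the $n\times 2n$ matrix and then restrict to the Schubert subvariety $\widetilde F$, where $\ll$ is chosen compatibly so that the initial ideal of the Schubert ideal $\pi^{-1}(\widetilde I)$ is generated by the initial ideal of $I_\mathrm{A}$ together with the monomials $X_{i_1,\dots,i_k}$ for non-admissible tuples. The remaining verifications — that $\xi$ is unimodular, that the claimed leading terms are correct, and that the pullback order $<$ exists — are routine adaptations of the type A computations and of~\cite{MM}.
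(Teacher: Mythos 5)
Your plan follows the same route the paper indicates (the extended abstract defers the detailed proof to~\cite{MM}, but the strategy is clearly spelled out by the surrounding text): use Theorem~\ref{intermediate} and Proposition~\ref{intermediatekernel} to reduce to the sagbi degeneration of the subalgebra generated by the admissible minors $D_{i_1,\dots,i_k}$ of the truncated $n\times 2n$ matrix, build the unimodular map $\xi$ from the type C pipe-dream bookkeeping (with the Lemma $|w^J(i)|\ge i$ ensuring well-definedness), read off the term order $\ll$ on $\bC[P]$ from the pipe dreams so that each $D_{w^J(1),\dots,w^J(k)}$ has the predicted squarefree leading term, and then lift. You have also correctly isolated the real technical burden — establishing the sagbi property for \emph{all} admissible minors under $\ll$ and compatibility with the Schubert relations coming from $\pi$ — and your idea of importing this from the type $\mathrm A_{2n-1}$ case via the tail-segment interpretation of type C pipe dreams is the natural mechanism. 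This matches the paper's intended argument; no genuine gap.
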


\subsection{Newton--Okounkov bodies}

Following~\cite{Ka} we associate a Newton--Okounkov body of $F$ with a line bundle $\cL$, a global section $\tau$ of $\cL$ and a valuation $\nu$ on the function field $\bC(F)$. We choose an integral dominant $\la=(a_1,\dots,a_n)$ and let $\cL$ be the $Sp_{2n}(\bC)$-equivariant line bundle on $F$ associated with the weight $\la$. In terms of the multiprojective embedding $F\subset\bP$ this is the restriction of $\cO(a_1,\dots,a_n)$ to $F$. Consider the $\bZ^N$-grading on $\bC[\Theta]$ given by $\grad X_{i_1,\dots,i_k}=\om_k$ and the induced grading on the Pl\"ucker algebra $R=C[\Theta]/I$. Then $H^0(F,\cL)$ is identified with the homogeneous component of grading $\la$ in $R$. No we choose $\tau\in H^0(F,\cL)$ as the image of $\prod_k X_{1,\dots,k}^{a_k}$ in $R$.

To define the valuation $\nu$ we first  define a valuation on $R$. Theorems~\ref{intermediate} and~\ref{degenmainC} provide a monomial order $<_0$ on $\bC[\Theta]$ such that $\initial_{<_0} I=\psi(I_O)$. The proofs of the theorems show that $<_0$ arises from a total monomial order $\ll$ on $\bC[P]$ in the following sense. Consider the homomorphism $\rho:\bC[\Theta]\to\bC[P]$ mapping the variable $\psi(X_J)$ to $z^{x_O(J)}$, note that $\rho=\varphi_O\psi^{-1}$ and $\ker\rho=\psi(I_O)$. Then for two monomials one has $M_1<_0 M_2$ if and only if $\rho(M_1)\ll\rho(M_2)$. The monomial order $\ll$ corresponds to a semigroup order on $\bZ_{\ge 0}^P$ which we also denote by $\ll$. We have a $(\bZ_{\ge 0}^P,\ll)$-filtration on $R$ with component $R_x$, $x\in\bZ_{\ge 0}^P$ spanned by the images of monomials $M\in\bC[\Theta]$ such that $\rho(M)\ll z^x$. By general properties of initial degenerations we then have $\gr R\simeq \bC[\Theta]/\initial_{<_0} I$ (which is the toric ring of $\cQ_O(\la)$). For nonzero $p\in R$ we now define $\nu(p)$ as the $\ll$-minimal $x$ such that $p\in R_x$: by definition, such a map is a valuation if and only if $\gr R$ is an integral domain. One sees that $\nu$ maps the (image in $R$ of) $\psi(X_J)$ to $x_O(J)$ and, consequently, \[\nu(H^0(F,\cL)\bs\{0\})=\cQ_O(\la)\cap\bZ^P.\] Since $\bC(F)$ consists of fractions $p/q$ where $\grad$-homogeneous $p,q\in R$ satisfy $\grad p=\grad q$, we can now extend the valuation to $\bC(F)$ by $\nu(p/q)=\nu(p)-\nu(q)$.

\begin{definition}
The \textit{Newton--Okounkov body} of $F$ defined by $\cL$, $\tau$ and $\nu$ is the convex hull closure \[\Delta=\overline{\conv\left\{\left.\frac{\nu(\sigma/\tau^m)}m\right|m\in\bZ_{>0},\sigma\in H^0(F,\cL^{\otimes m})\bs\{0\}\right\}}\subset\bR^P.\]
\end{definition}


For every $k\in[1,n]$ we have a unique $J\in\cJ_k$ such that $w^J(\{1,\dots,k\})=\{1,\dots,k\}$, denote $x_k=x_O(J)$. For $\la=(a_1,\dots,a_n)$ denote $x_\la=a_1x_1+\dots+a_nx_n$. We have $\nu(\tau)=x_\la$ and it is now straightforward to deduce
\begin{thm}\label{NOmain}
$\Delta=\cQ_O(\la)-x_\la$.    
\end{thm}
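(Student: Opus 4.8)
The plan is to evaluate, directly and explicitly, the set whose closed convex hull defines $\Delta$, using the description of the valuation $\nu$ already obtained. First I would record two consequences of the fact (established above) that $\gr R\simeq\bC[\Theta]/\initial_{<_0}I$ is the toric ring of $\cQ_O(\la)$, hence an integral domain. One: $\nu$ is then a genuine valuation, so it is additive, $\nu(pq)=\nu(p)+\nu(q)$, and the induced valuation on $\bC(F)$ satisfies $\nu(p/q)=\nu(p)-\nu(q)$. Two: since $\tau$ is the image in $R$ of $\prod_k X_{1,\dots,k}^{a_k}$ with $X_{1,\dots,k}=\psi(X_J)$ for the unique $J\in\cJ_k$ having $w^J(\{1,\dots,k\})=\{1,\dots,k\}$, additivity gives $\nu(\tau)=\sum_k a_k\,\nu(\psi(X_J))=\sum_k a_k x_k=x_\la$. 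Consequently $\nu(\sigma/\tau^m)=\nu(\sigma)-m\,x_\la$ for every $m\ge1$ and every nonzero $\sigma\in H^0(F,\cL^{\otimes m})$.

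Next I would pin down the possible values of $\nu(\sigma)$. As in the excerpt, $H^0(F,\cL^{\otimes m})$ is the grading-$m\la$ homogeneous component $R_{m\la}$ of the symplectic Pl\"ucker algebra $R$; for general $m$ this uses the classical fact that $R$ is the full multihomogeneous section ring of $F$. The computation already carried out for $m=1$ then applies verbatim with $\la$ replaced by $m\la$, since $\gr R$ is the toric ring of the whole family $\cQ_O$, the polytopes $\cQ_O(\om_k)$ are normal, and Proposition~\ref{intpointsC} (for the weight $m\la$) supplies the matching dimension count. This gives
\[
\nu\bigl(H^0(F,\cL^{\otimes m})\bs\{0\}\bigr)=\cQ_O(m\la)\cap\bZ^P=\bigl(m\,\cQ_O(\la)\bigr)\cap\bZ^P,
\]
the last equality because $\cQ_O(m\la)=\sum_k (ma_k)\cQ_O(\om_k)=m\,\cQ_O(\la)$ by Definition~\ref{mcopdefC}. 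Combining this with the previous paragraph, the set whose closed convex hull equals $\Delta$ is
\[
\bigcup_{m\ge 1}\Bigl(\tfrac1m\bigl((m\,\cQ_O(\la))\cap\bZ^P\bigr)-x_\la\Bigr).
\]

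Finally I would take the closed convex hull of this set. Every one of its points lies in $\cQ_O(\la)-x_\la$, which is closed and convex, so $\Delta\subseteq\cQ_O(\la)-x_\la$. For the reverse inclusion I would use that $\cQ_O(\la)$ is a lattice polytope, being a Minkowski sum of the $0/1$-polytopes $\cQ_O(\om_k)$ and Minkowski sums of lattice polytopes being lattice polytopes; hence every vertex of $\cQ_O(\la)$ lies in $\cQ_O(\la)\cap\bZ^P$, i.e.\ already in the $m=1$ term of the union above, so $\cQ_O(\la)=\conv(\cQ_O(\la)\cap\bZ^P)$ is contained in the convex hull of that union translated by $x_\la$. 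Therefore $\Delta=\cQ_O(\la)-x_\la$.

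I expect no serious obstacle here: the two nontrivial ingredients — that $\gr R$ is the normal toric ring of $\cQ_O(\la)$ and is a domain, and the lattice-point identity of Proposition~\ref{intpointsC} together with its version for $m\la$ — are furnished by the earlier results, and the remainder is bookkeeping with Minkowski sums, valuations and convex hulls. The one technical point worth a line of justification is the identification $H^0(F,\cL^{\otimes m})=R_{m\la}$ for all $m\ge1$, i.e.\ the projective normality of the symplectic flag variety in its Pl\"ucker embedding.
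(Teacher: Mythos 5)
Your proposal is correct and matches the argument the paper is implicitly invoking when it says the theorem is ``now straightforward to deduce'': the paper has already established $\nu(H^0(F,\cL)\bs\{0\})=\cQ_O(\la)\cap\bZ^P$ and $\nu(\tau)=x_\la$, and you simply extend the first identity from $\la$ to $m\la$ (using that $\nu$, $\ll$ and the grading on $R$ do not depend on $\la$, and that $\cQ_O(m\la)=m\cQ_O(\la)$), then compute the closed convex hull in the definition of $\Delta$, using that $\cQ_O(\la)$ is a lattice polytope for the reverse inclusion. The one point you flag as needing justification, $H^0(F,\cL^{\otimes m})=R_{m\la}$, is already built into the paper's identification of $H^0(F,\cL)$ with the weight-$\la$ graded piece of $R$, applied with $m\la$ in place of $\la$.
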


\acknowledgements{We thank Michael Joswig for helpful comments.}

\printbibliography

\end{document}